\documentclass[review,onefignum,onetabnum]{siamart171218}
\usepackage[left = 1.7cm, right = 1.7cm, top = 2cm, bottom = 2cm]{geometry}
\usepackage{tikz}
\usepackage{xstring}
\usepackage{tkz-euclide}
\usepackage{tikz-cd}
\usepackage[normalem]{ulem}
\usepackage{amssymb}
\usepackage{mathtools}
\usepackage{cancel}
\usepackage{subfigure}
\usepackage{subcaption}
\usepackage{mathrsfs}
\definecolor{codegreen}{rgb}{0,0.6,0}
\usetikzlibrary{patterns.meta}
\usetikzlibrary{decorations.pathreplacing,calligraphy}

\usepackage{lipsum}
\usepackage{amsfonts}
\usepackage{graphicx}
\usepackage{tikz-cd}
\usepackage{epstopdf}
\usepackage{algorithmic}
\ifpdf
  \DeclareGraphicsExtensions{.eps,.pdf,.png,.jpg}
\else
  \DeclareGraphicsExtensions{.eps}
\fi

\newsiamremark{remark}{Remark}
\newsiamremark{hypothesis}{Hypothesis}
\crefname{hypothesis}{Hypothesis}{Hypotheses}
\newsiamthm{claim}{Claim}

\headers{An Approximate Bounded Cochain Projection}{Gerritsma and Shrestha}

\title{An Approximate Bounded Cochain Projection}

\author{Marc Gerritsma\thanks{TU Delft, The Netherlands 
  (\email{m.i.gerritsma@tudelft.nl}).}
\and Suyash Shrestha\thanks{TU Delft, The Netherlands 
  (\email{s.shrestha-1@tudelft.nl}).}}

\usepackage{amsopn}

\makeatletter
\newcommand*{\addFileDependency}[1]{
  \typeout{(#1)}
  \@addtofilelist{#1}
  \IfFileExists{#1}{}{\typeout{No file #1.}}
}
\makeatother

\newcommand*{\myexternaldocument}[1]{%
    \externaldocument{#1}%
    \addFileDependency{#1.tex}%
    \addFileDependency{#1.aux}%
}

\ifpdf
\hypersetup{
  pdftitle={An Approximate Bounded Cochain Projection},
  pdfauthor={Marc Gerritsma and Suyash Shrestha}
}
\fi

\myexternaldocument{ex_supplement}

\newcommand{\bilnear}[2]{(#1, #2)}

\newcommand{\kForm}[2]{#1^{(#2)}}

\newcommand{\normHk}[2]{\| #1 \|_{H\Lambda^{(#2)}(\Omega)}}

\newcommand{\normLtwo}[2]{\| #1 \|_{L^2 \Lambda^{(#2)}(\Omega)}}

\begin{document}

\maketitle

\begin{abstract}
  This paper presents a construction of a projector from an infinite-dimensional Hilbert complex of differential $k$-forms onto a finite-dimensional piecewise polynomial sub-complex. We demonstrate that, on contractable domains, the proposed projector attains the three properties of the Bounded Cochain Projector, namely that the projector is idempotent, uniformly bounded in the Sobolev norm, and it commutes with the exterior derivative. On non-contractible domains, the projector remains idempotent and uniformly bounded, while the commuting property is satisfied up to arbitrary accuracy.
\end{abstract}

\begin{keywords}
  Bounded Cochain Projection, de Rham sequence.
\end{keywords}

\begin{AMS}
  \textcolor{black}{35A35, 46N40, 47A46}
\end{AMS}

\section{Introduction}
Let $\Omega$ be an $n$-dimensional manifold with sufficiently smooth Lipschitz boundary. We consider the closed Hilbert complex of differential forms introduced in \cite{AFW06},
\[
\mathbb{R}
\hookrightarrow
H\Lambda^{(0)}(\Omega)
\xrightarrow{\;\mathrm{d}\;}
H\Lambda^{(1)}(\Omega)
\xrightarrow{\;\mathrm{d}\;}
\cdots
\xrightarrow{\;\mathrm{d}\;}
H\Lambda^{(n-1)}(\Omega)
\xrightarrow{\;\mathrm{d}\;}
H\Lambda^{(n)}(\Omega)
\longrightarrow 0.
\]
The exactness of this sequence depends on the global topology of $\Omega$. In particular, if $\Omega$ is contractible, the de~Rham complex is exact, meaning that the range of the exterior derivative equals the null space of this operator in the subsequent space. In contrast, when $\Omega$ is not contractible, the complex possesses a nontrivial cohomology, and the range of the exterior derivative forms only a subset of the null space of the subsequent exterior derivative.

For each space $H\Lambda^{(k)}(\Omega)$, we introduce a finite-dimensional subspace
\(
H\Lambda_h^{(k)}(\Omega)\subset H\Lambda^{(k)}(\Omega),
\)
chosen such that the exterior derivative maps between successive finite-dimensional function spaces. This yields a finite-dimensional subcomplex
\[
\mathbb{R}
\hookrightarrow
H\Lambda_h^{(0)}(\Omega)
\xrightarrow{\;\mathrm{d}\;}
H\Lambda_h^{(1)}(\Omega)
\xrightarrow{\;\mathrm{d}\;}
\cdots
\xrightarrow{\;\mathrm{d}\;}
H\Lambda_h^{(n-1)}(\Omega)
\xrightarrow{\;\mathrm{d}\;}
H\Lambda_h^{(n)}(\Omega)
\longrightarrow 0.
\]



A bounded cochain projection, $\Pi^{(k)}$, maps elements of $H\Lambda^{(k)}(\Omega)$ onto $H\Lambda_h^{(k)}(\Omega)$ such that the projector is idempotent, i.e. $\Pi^{(k)} \Pi^{(k)} = \Pi^{(k)}$, the projected solution is bounded in the Sobolev norm, $\normHk{ \Pi^{(k)}\phi^{(k)}}{k} \leq C \normHk{ \phi^{(k)}}{k}$ for all $\phi^{(k)}\in H\Lambda^{(k)}(\Omega)$, and the projector commutes with the exterior derivative, $\mathrm{d} \Pi^{(k)} = \Pi^{(k+1)} \mathrm{d}$ leading to the following commuting diagram 

\[
\begin{tikzcd}
    \hdots \arrow[r] & H\Lambda^{(k - 1)}(\Omega) \arrow[r,"\mathrm{d}"] \arrow[d,"\Pi^{(k - 1)}"] & H\Lambda^{(k)}(\Omega) \arrow[r,"\mathrm{d}"] \arrow[d,"\Pi^{(k)}"] & H\Lambda^{(k + 1)}(\Omega) \arrow[r] \arrow[d,"\Pi^{(k + 1)}"] & \hdots \\
     \hdots \arrow[r] & H\Lambda_h^{(k - 1)}(\Omega) \arrow[r,"\mathrm{d}"] & H\Lambda_h^{(k)}(\Omega) \arrow[r,"\mathrm{d}"] & H\Lambda_h^{(k + 1)}(\Omega) \arrow[r] & \hdots
\end{tikzcd}
\]

In \cite{AFW06, AFW10} the bounded cochain projection was introduced. The construction of local bounded cochain projections is presented in \cite{FW12} and is extended to double complexes in \cite{FW14}. Subsequent work, \cite{Arnold2021LocalFEEC, Chaumont-Frelet2024AvarvecHtextrmcurl}, refined these ideas in several directions, including local constructions, improved approximation properties, preservation of boundary conditions, and extensions to general finite element systems. More recently, \cite{Ern2025LocalSplits} developed local $L^2$-bounded commuting projections by replacing the continuous regularisation used in previous constructions, such as \cite{ChristiansenWinther06}, with local discrete problems posed on an auxiliary refinement. Their construction establishes fully local commuting projections with provable local $L^2$ stability while avoiding the analytical difficulties associated with continuous right inverses of the exterior derivative. This represents the first completely discrete construction of locally $L^2$-bounded commuting projections. Likewise, in \cite{PintoSchnack23}, commuting $L^2$-projections on non-matching interfaces are presented. These papers employ finite-dimensional function spaces using Finite Element Exterior Calculus (FEEC). Similar results are obtained with the Mimetic Spectral Element Method, \cite{Jain2021ConstructionMeshes}. The only requirement on the finite-dimensional subspaces is that they form a de Rham sequence, i.e. $\mathrm{d}H\Lambda_h^{(k)}(\Omega) \subset H\Lambda_h^{(k+1)}(\Omega)$. 

The goal of the present work is to formulate a construction of a projector that fulfils the three aforementioned properties of a bounded cochian projection. 
The paper is organised as follows. In Section~\ref{sec:prerequisites} some notation is introduced followed by the introduction of the canonical projector in Section~\ref{sec:cano_proj}. In Section~\ref{sec:Augmented} a mixed formulation is presented for which the existence and uniqueness of a solution will be shown. The properties of the presented mixed formulation are analysed in Section~\ref{sec:BCP}. Lastly, the construction of the proposed projector based on a two-level discrete complex is presented in Section~\ref{sec:pro_proj}.

\section{Prerequisites}\label{sec:prerequisites}
In the de~Rham complex of an $n$-manifold $\Omega$ with sufficiently smooth Lipschitz boundary, we equip the spaces of differential forms with the standard $L^2$ and graph norms. The $L^2$ inner-product on $H\Lambda^{(k)}(\Omega)$ is defined by
\[
(\kForm{\alpha}{k},\kForm{\beta}{k})
:=
\int_{\Omega}
\kForm{\alpha}{k}
\wedge
\star \kForm{\beta}{k},
\qquad
\forall\,
\kForm{\alpha}{k},
\kForm{\beta}{k}
\in
H\Lambda^{(k)}(\Omega).
\]
The associated $L^2$ norm is given by
\[
\normLtwo{\kForm{\alpha}{k}}{k}^2
=
(\kForm{\alpha}{k},\kForm{\alpha}{k}).
\]
Following \cite{AFW06}, we define the $H\Lambda^{(k)}$ inner-product by
\[
(\kForm{\alpha}{k},\kForm{\beta}{k})_{H\kForm{\Lambda}{k}(\Omega)}
:=
\int_{\Omega}
\kForm{\alpha}{k}
\wedge
\star \kForm{\beta}{k}
+
\int_{\Omega}
\mathrm{d}\kForm{\alpha}{k}
\wedge
\star \mathrm{d}\kForm{\beta}{k} =
(\kForm{\alpha}{k},\kForm{\beta}{k})
+
(\mathrm{d}\kForm{\alpha}{k},
 \mathrm{d}\kForm{\beta}{k}),
\qquad
\forall\,
\kForm{\alpha}{k},
\kForm{\beta}{k}
\in
H\Lambda^{(k)}(\Omega).
\]
The corresponding $H$-norm is
\[
\normHk{\kForm{\omega}{k}}{k}^2
=
(\kForm{\omega}{k},\kForm{\omega}{k})_{H\kForm{\Lambda}{k}(\Omega)}
=
\normLtwo{\kForm{\omega}{k}}{k}^2
+
\normLtwo{\mathrm{d}\kForm{\omega}{k}}{k+1}^2.
\]

We denote the space of closed differential forms by
\[
\mathcal{Z}^{(k)}
=
\left\{
\kForm{\omega}{k}
\in
H\Lambda^{(k)}(\Omega)
\;\big|\;
\mathrm{d}\kForm{\omega}{k}=0
\right\},
\]
and its $L^2$-orthogonal complement by
\[
\mathcal{Z}^{(k),\perp}
=
\left\{
\kForm{\eta}{k}
\in
H\Lambda^{(k)}(\Omega)
\;\big|\;
(\kForm{\eta}{k},\kForm{\omega}{k})=0,
\quad
\forall\,
\kForm{\omega}{k}
\in
\mathcal{Z}^{(k)}
\right\}.
\]
For every
\(
\kForm{\omega}{k}
\in
\mathcal{Z}^{(k),\perp}
\),
the Poincar\'e inequality holds:
\begin{equation}
\normHk{\kForm{\omega}{k}}{k}
\leq
C_P
\normLtwo{\mathrm{d}\kForm{\omega}{k}}{k+1}.
\label{eq:Poincare}
\end{equation}
Since
\(
H\Lambda^{(k)}(\Omega)
=
\mathcal{Z}^{(k)}
\oplus
\mathcal{Z}^{(k),\perp},
\)
every
\(
\kForm{\omega}{k}
\in
H\Lambda^{(k)}(\Omega)
\)
admits a unique decomposition
\(
\kForm{\omega}{k}
=
\kForm{\omega}{k}_0
+
\kForm{\omega}{k}_{\perp},
\)
where
\(
\kForm{\omega}{k}_0
\in
\mathcal{Z}^{(k)}
\)
and
\(
\kForm{\omega}{k}_{\perp}
\in
\mathcal{Z}^{(k),\perp}
\).

We define the space of exact differential $k$-forms by
\[
\mathcal{R}^{(k)}
=
\left\{
\kForm{\omega}{k}
\;\big|\;
\exists\,
\kForm{\alpha}{k-1}
\in
H\Lambda^{(k-1)}(\Omega)
\;\text{s.t.}\;
\kForm{\omega}{k}
=
\mathrm{d}\kForm{\alpha}{k-1}
\right\},
\]
and the space of harmonic forms by
\[
\mathcal{H}^{(k)}
=
\left\{
\kForm{\zeta}{k}
\in
\mathcal{Z}^{(k)}
\;\big|\;
\cancel{\exists}
\,
\kForm{\beta}{k-1}
\in
H\kForm{\Lambda}{k-1}(\Omega)
\;\text{s.t.}\;
\kForm{\zeta}{k}
=
\mathrm{d}\kForm{\beta}{k-1}
\right\}.
\]
Since the exterior derivative is nilpotent we have $\mathcal{R}^{(k)} \subseteq \mathcal{Z}^{(k)}$. More precisely, we have the decomposition $\mathcal{Z}^{(k)} = \mathcal{R}^{(k)} \oplus \mathcal{H}^{(k)}$ on general non-contractable domains. On contractible domains, $\mathcal{H}^{(k)} = \{0\}$, so every closed form is exact.

Let $H\Lambda_h^{(k)}(\Omega) \subset H\Lambda^{(k)}(\Omega)$ be a conforming finite-dimensional subspace. We define the space of closed forms by
\[
\mathcal{Z}_h^{(k)}
=
\left\{
\kForm{\omega}{k}
\in
H\Lambda_h^{(k)}(\Omega)
\;\big|\;
\mathrm{d}\kForm{\omega}{k}=0
\right\},
\]
and its orthogonal complement by
\[
\mathcal{Z}_h^{(k),\perp}
=
\left\{
\kForm{\eta}{k}
\in
H\Lambda_h^{(k)}(\Omega)
\;\big|\;
(\kForm{\eta}{k},\kForm{\omega}{k})=0,
\quad
\forall\,
\kForm{\omega}{k}
\in
\mathcal{Z}_h^{(k)}
\right\}.
\]
Consequently, we have the decomposition of $H\kForm{\Lambda_h}{k}(\Omega)$
\[
H\Lambda_h^{(k)}(\Omega)
=
\mathcal{Z}_h^{(k)}
\oplus
\mathcal{Z}_h^{(k),\perp}.
\]
The finite-dimensional spaces of exact and harmonic differential forms are defined as
\[
\mathcal{R}_h^{(k)}
=
\left\{
\kForm{\omega}{k}
\;\big|\;
\exists\,
\kForm{\alpha}{k-1}
\in
H\Lambda_h^{(k-1)}(\Omega)
\;\text{s.t.}\;
\kForm{\omega}{k}
=
\mathrm{d}\kForm{\alpha}{k-1}
\right\},
\]
and
\[
\mathcal{H}_h^{(k)}
=
\left\{
\kForm{\zeta}{k}
\in
\mathcal{Z}_h^{(k)}
\;\big|\;
\cancel{\exists}
\,
\kForm{\beta}{k-1}
\in
H\kForm{\Lambda_h}{k-1}(\Omega)
\;\text{s.t.}\;
\kForm{\zeta}{k}
=
\mathrm{d}\kForm{\beta}{k-1}
\right\},
\]
which yields the decomposition of $\kForm{\mathcal{Z}_h}{k}$
\[
\mathcal{Z}_h^{(k)}
=
\mathcal{R}_h^{(k)}
\oplus
\mathcal{H}_h^{(k)}.
\]

In general, the harmonic space of $H\kForm{\Lambda_h}{k}(\Omega)$ is not a subspace of the harmonic space in $H\kForm{\Lambda}{k}(\Omega)$. To quantify the discrepancy between these spaces, \cite{AFW10} employs the notion of the gap between closed subspaces of a Hilbert space \cite[Chapter IV, \S 2.1]{Kato_1995}. For two subspaces
\(
X,Y \subset H\Lambda^{(k)}(\Omega)
\),
define
\[
\delta(X,Y)
=
\sup_{\substack{x\in X\\x\neq 0}}
\inf_{y\in Y}
\frac{\normHk{x-y}{k}}
     {\normHk{x}{k}},
\qquad
\mathrm{gap}(X,Y)
=
\max\{\delta(X,Y),\delta(Y,X)\}.
\]
The distance between the infinite and finite-dimensional harmonic spaces is therefore measured as
\[
\mathrm{gap}
\bigl(
\mathcal{H}^{(k)},
\mathcal{H}_h^{(k)}
\bigr).
\]
A fundamental result of FEEC \cite{AFW10} states that if the finite-dimensional spaces form a bounded subcomplex, then
\[
\mathrm{gap}
\bigl(
\mathcal{H}^{(k)},
\mathcal{H}_h^{(k)}
\bigr)
\longrightarrow 0,
\qquad
h\longrightarrow 0.
\]
This guarantees convergence of finite-dimensional harmonic forms to their infinite-dimensional counterparts. In this work, we shall use the terminology \emph{topologically conforming} to refer to case where $\mathrm{gap}
\bigl(
\mathcal{H}^{(k)},
\mathcal{H}_h^{(k)}
\bigr) = 0$.

\section{The canonical projector}
\label{sec:cano_proj}

Let $H\Lambda_h^{(k)}(\Omega) \subset H\Lambda^{(k)}(\Omega)$ be a conforming finite element subcomplex. A central object in FEEC is a sequence of projection operators, \cite{Ern2025LocalSplits,FW12,framework}
\[
\Pi^{(k)}_{\mathscr{C}} : H\Lambda^{(k)}(\Omega) \to H\Lambda_h^{(k)}(\Omega),
\]
collectively referred to as the canonical (cochain) projector. The linear projector $\Pi^{(k)}_{\mathscr{C}}$ is idempotent and it satisfies the commuting property, i.e.
\[
\Pi^{(k)}_{\mathscr{C}} \Pi^{(k)}_{\mathscr{C}} = \Pi^{(k)}_{\mathscr{C}}, \qquad \mathrm{d}  \Pi^{(k)}_{\mathscr{C}}
=
\Pi^{(k+1)}_{\mathscr{C}} \mathrm{d},
\]
\[
\begin{tikzcd}
    \hdots \arrow[r] &
    H\Lambda^{(k-1)}(\Omega)
    \arrow[r, "\mathrm{d}"]
    \arrow[d, "\Pi^{(k-1)}_{\mathscr{C}}"]
    &
    H\Lambda^{(k)}(\Omega)
    \arrow[r, "\mathrm{d}"]
    \arrow[d, "\Pi^{(k)}_{\mathscr{C}}"]
    &
    H\Lambda^{(k+1)}(\Omega)
    \arrow[r, "\mathrm{d}"]
    \arrow[d, "\Pi^{(k+1)}_{\mathscr{C}}"]
    &
    \hdots
    \\
    \hdots \arrow[r] &
    H\Lambda_h^{(k-1)}(\Omega)
    \arrow[r, "\mathrm{d}"]
    &
    H\Lambda_h^{(k)}(\Omega)
    \arrow[r, "\mathrm{d}"]
    &
    H\Lambda_h^{(k+1)}(\Omega)
    \arrow[r]
    &
    \hdots
\end{tikzcd}
\]

Generally, the elements of $H\kForm{\Lambda}{k}(\Omega)$ lack the regularity\footnote{In fact, the elements of $H\kForm{\Lambda}{k}(\Omega)$ are equivalence classes of differential forms and not differential forms themselves.} to directly apply the canonical projector. Typically, $\Pi^{(k)}_{\mathscr{C}}$ is constructed as a composition of a smoothing operator followed by a local interpolation operator. A typical representation is
\[
\Pi^{(k)}_{\mathscr{C}} = I_h^{(k)}  \mathcal{S}^{(k)},
\]
where $\mathcal{S}^{(k)} : H\Lambda^{(k)}(\Omega) \to C^\infty \Lambda^{(k)}(\Omega)$ is a regularization (or smoothing) operator ensuring sufficient pointwise regularity, and
\[
I_h^{(k)} : C^\infty \Lambda^{(k)}(\Omega) \to H\Lambda_h^{(k)}(\Omega)
\]
is a canonical interpolation operator defined by the degrees of freedom of the finite element space. The commuting property is inherited from the compatibility of the smoothing operator with the exterior derivative and the design of the finite element degrees of freedom.

However, $\Pi^{(k)}_{\mathscr{C}}$ is, in general, not uniformly bounded in the $H\Lambda^{(k)}$ norm. That is, there does not necessarily exist a constant $C > 0$, independent of the mesh parameter $h$, such that
\[
\normHk{\Pi^{(k)}_{\mathscr{C}} \kForm{\omega}{k}}{k}
\leq
C \, \normHk{\kForm{\omega}{k}}{k},
\qquad
\forall \kForm{\omega}{k} \in H\Lambda^{(k)}(\Omega).
\]

\section{Augmented mixed formulation} \label{sec:Augmented}
We now introduce an augmented mixed formulation, which will form a part of our proposed bounded cochain projector. In this section, we present the augmented system and prove well-posedness.

We consider the following variational formulation: Given $\kForm{\phi}{k}\in H\Lambda^{(k)}(\Omega)$, find $\lambda^{(i)} \in H\Lambda_h^{(i)}(\Omega)$ for $i=k,\ldots,k+2$ and $\kForm{h}{k+1}\in \mathcal{H}_h^{(k+1)}$, such that
$\forall \kForm{\eta}{j}\in \kForm{H\Lambda_h}{j + 1}(\Omega)$ for $j=k,\ldots, k+2$ and $\forall \kForm{\gamma}{k+1}\in \mathcal{H}_h^{(k+1)}$, we satisfy

\begin{equation}
    \left \{ \begin{array}{ccccccccc}
    \bilnear{\kForm{\eta}{k}}{\kForm{\lambda}{k}} & + & \bilnear{\mathrm{d} \kForm{\eta}{k}}{\kForm{\lambda}{k + 1}} & & & & & = & \bilnear{\kForm{\eta}{k}}{\kForm{\phi}{k}}  \\ [1.5ex]
    \bilnear{\kForm{\eta}{k + 1}}{\mathrm{d} \kForm{\lambda}{k}} &  & + &  & \bilnear{\mathrm{d} \kForm{\eta}{k + 1}}{\kForm{\lambda}{k + 2}}  & + &\bilnear{\kForm{\eta}{k + 1}}{\kForm{h}{k + 1}} & = & \bilnear{\kForm{\eta}{k + 1}}{\mathrm{d}\kForm{\phi}{k}} \\ [1.5ex]
      & & \bilnear{\kForm{\eta}{k + 2}}{\mathrm{d} \kForm{\lambda}{k + 1}} & &  & & & = & \bilnear{\kForm{\eta}{k + 2}}{0} \\ [1.5ex]
     & & \bilnear{\kForm{\gamma}{k + 1}}{\kForm{\lambda}{k + 1}} & & & & & = & \bilnear{\kForm{\gamma}{k + 1}}{0}
    \end{array} \right . \; .
\label{eq:BCP}
\end{equation}

Note that the system is symmetric. To establish the well-posedness of \eqref{eq:BCP}, we verify the conditions noted in \cite{Boffi2013MixedApplications}.

\begin{proposition}[Stability conditions]
The system \eqref{eq:BCP} is well-posed, provided that

\begin{enumerate}
    \item the bilinear form
    $\bilnear{\kForm{\eta}{k}}{\kForm{\lambda}{k}}$
    is bounded and coercive on
    $\mathcal{Z}_h^{(k)}$

    \item there exists a constant
    $\beta_{\mathcal H}>0$
    independent of the mesh size, such that
    \begin{equation}
        \inf_{\kForm{h}{k+1}\in\mathcal{H}_h^{(k+1)}}
        \sup_{\kForm{\eta}{k+1}\in H\Lambda_h^{(k+1)}(\Omega)}
        \frac{
            \bilnear{\kForm{\eta}{k+1}}{\kForm{h}{k+1}}
        }{
            \|\kForm{\eta}{k+1}\|_{H\Lambda_h^{(k+1)}(\Omega)}
            \|\kForm{h}{k+1}\|_{H\Lambda_h^{(k+1)}(\Omega)}
        }
        \geq
        \beta_{\mathcal H};
    \end{equation}

    \item the bilinear forms
    $\bilnear{\mathrm{d}\kForm{\eta}{j}}{\kForm{\lambda}{j+1}}$,
    $j=k,k+1$,
    satisfy the inf-sup condition
    \begin{equation}
        \adjustlimits\inf_{\kForm{\lambda}{j+1}\in H\Lambda_h^{(j+1)}(\Omega)}
        \sup_{\kForm{\eta}{j}\in\mathcal{Z}_h^{(j),\perp}}
        \frac{
            \bilnear{\mathrm{d}\kForm{\eta}{j}}
                    {\kForm{\lambda}{j+1}}
        }{
            \normHk{\kForm{\eta}{j}}{j}
            \normHk{\kForm{\lambda}{j+1}}{j+1}
        }
        \geq
        \beta_j
        >
        0.
        \label{eq:infsup}
    \end{equation}
\end{enumerate}
\end{proposition}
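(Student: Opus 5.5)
The plan is to use that \eqref{eq:BCP} is a square, symmetric linear system on finite-dimensional spaces. Existence therefore follows from uniqueness, and it suffices to show that the homogeneous problem ($\kForm{\phi}{k}=0$) has only the trivial solution. The mesh-independent stability bound is then obtained by tracking the constants in each step, following the nested Babu\v{s}ka--Brezzi argument of \cite{Boffi2013MixedApplications}. I will peel off the unknowns one at a time, using one stability condition at each step.

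\emph{Step 1 (the third and fourth equations).} The form $\mathrm{d}\kForm{\lambda}{k+1}$ lies in $H\Lambda_h^{(k+2)}(\Omega)$ and $\kForm{\eta}{k+2}$ ranges over that space, so the third equation gives $\mathrm{d}\kForm{\lambda}{k+1}=0$. Hence $\kForm{\lambda}{k+1}\in\mathcal{Z}_h^{(k+1)}$. The fourth equation states that $\kForm{\lambda}{k+1}$ is $L^2$-orthogonal to $\mathcal{H}_h^{(k+1)}$. Taking the decomposition $\mathcal{Z}_h^{(k+1)}=\mathcal{R}_h^{(k+1)}\oplus\mathcal{H}_h^{(k+1)}$ as orthogonal, this yields $\kForm{\lambda}{k+1}=\mathrm{d}\kForm{\alpha}{k}$ for some $\kForm{\alpha}{k}\in H\Lambda_h^{(k)}(\Omega)$.

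\emph{Step 2 (the first equation restricted to closed forms).} Testing the first equation with $\kForm{\eta}{k}\in\mathcal{Z}_h^{(k)}$ kills the $\mathrm{d}$-term and gives $(\kForm{\eta}{k},\kForm{\lambda}{k})=0$, so $\kForm{\lambda}{k}\in\mathcal{Z}_h^{(k),\perp}$.

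\emph{Step 3 (the second equation with $\kForm{\eta}{k+1}=\mathrm{d}\kForm{\lambda}{k}$).} With this test form, the term with $\kForm{\lambda}{k+2}$ vanishes because $\mathrm{d}\mathrm{d}=0$. The harmonic term vanishes because exact forms are orthogonal to $\mathcal{H}_h^{(k+1)}$. What remains is $\normLtwo{\mathrm{d}\kForm{\lambda}{k}}{k+1}^2=0$, so $\kForm{\lambda}{k}\in\mathcal{Z}_h^{(k)}\cap\mathcal{Z}_h^{(k),\perp}=\{0\}$. This is where condition 1 (coercivity on $\mathcal{Z}_h^{(k)}$) and the discrete Poincar\'e inequality on $\mathcal{Z}_h^{(k),\perp}$ give quantitative control in the nonhomogeneous case.

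\emph{Step 4 (the first equation with $\kForm{\eta}{k}=\kForm{\alpha}{k}$).} Now the first equation reduces to $(\mathrm{d}\kForm{\eta}{k},\kForm{\lambda}{k+1})=0$ for all $\kForm{\eta}{k}$. Choosing $\kForm{\eta}{k}=\kForm{\alpha}{k}$ gives $\normLtwo{\kForm{\lambda}{k+1}}{k+1}^2=0$, hence $\kForm{\lambda}{k+1}=0$. Equivalently, this is the inf-sup condition \eqref{eq:infsup} with $j=k$.

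\emph{Step 5 (the second equation with $\kForm{\eta}{k+1}=\kForm{h}{k+1}$).} Since $\mathrm{d}\kForm{h}{k+1}=0$, this test gives $\normLtwo{\kForm{h}{k+1}}{k+1}^2=0$, so $\kForm{h}{k+1}=0$. This mirrors condition 2, which is trivially satisfied with $\beta_{\mathcal H}=1$ by taking $\kForm{\eta}{k+1}=\kForm{h}{k+1}$, because $\normHk{\kForm{h}{k+1}}{k+1}=\normLtwo{\kForm{h}{k+1}}{k+1}$ on closed forms.

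\emph{Step 6 (the remaining unknown $\kForm{\lambda}{k+2}$).} The second equation now reads $(\mathrm{d}\kForm{\eta}{k+1},\kForm{\lambda}{k+2})=0$ for all $\kForm{\eta}{k+1}\in H\Lambda_h^{(k+1)}(\Omega)$. Here the inf-sup condition \eqref{eq:infsup} with $j=k+1$ is genuinely needed. It asserts that $\mathrm{d}\mathcal{Z}_h^{(k+1),\perp}$ is $L^2$-dense in, and hence equal to, $H\Lambda_h^{(k+2)}(\Omega)$. It therefore forces $\kForm{\lambda}{k+2}=0$.

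I expect Step 6 to be the main obstacle. Condition 3 for $j=k+1$ requires surjectivity of $\mathrm{d}$ onto $H\Lambda_h^{(k+2)}(\Omega)$, which fails when $\mathcal{H}_h^{(k+2)}\neq\{0\}$ or $k+2=n+1$. So I would state explicitly that the proposition is conditional on \eqref{eq:infsup} and simply invoke it there.

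For the stability estimate, I would redo Steps 1--6 with nonzero data and obtain bounds in the order $\kForm{\lambda}{k}$, then $\kForm{h}{k+1}$, then $\kForm{\lambda}{k+1}$, then $\kForm{\lambda}{k+2}$. Each bound is in terms of $\normHk{\kForm{\phi}{k}}{k}$, with constants depending only on the continuity bounds, $\beta_{\mathcal H}$, $\beta_k$, $\beta_{k+1}$ and $C_P$. This gives a constant independent of $h$.
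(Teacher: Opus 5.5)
Your argument is correct, but it follows a different route from the paper. The paper never proves this proposition on its own terms. It appeals to the abstract saddle-point theory of \cite{Boffi2013MixedApplications} and then checks conditions 1, 2 and 3 in the lemmas that follow. Condition 3 is checked only for $j=k$, and only after $\lambda^{(k+1)}$ has been restricted to $\mathcal{R}_h^{(k+1)}$ via the third and fourth equations. In Theorem~\ref{the:well_posed} the paper concedes that condition 3 fails for $j=k+1$.

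You instead use that \eqref{eq:BCP} is a square finite-dimensional system and prove injectivity directly. You peel off the unknowns with explicit test forms, which are the same ones the paper later uses for boundedness, the harmonic multiplier and the inf-sup lemma. What this buys is transparency. It shows that uniqueness of $\lambda^{(k)}$, $\lambda^{(k+1)}$ and $h^{(k+1)}$ needs none of the three hypotheses, only the subcomplex property and $\mathcal{R}_h^{(k+1)}\perp\mathcal{H}_h^{(k+1)}$. Condition 3 with $j=k+1$ is used only in Step 6. Without it you are left with exactly $\lambda^{(k+2)}\perp\mathcal{R}_h^{(k+2)}$, which is the non-uniqueness modulo $\mathcal{R}_h^{(k+2),\perp}$ recorded in Theorem~\ref{the:well_posed}. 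The abstract route would deliver the mesh-independent constants in one stroke. However, the paper does not say how the conditions combine for this four-block system. Once your nonhomogeneous estimates are written out, your explicit argument is the more complete one.

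Two corrections are needed.

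\emph{Source of the Poincar\'e inequality.} In the stability sketch, take the uniform Poincar\'e inequality on $\mathcal{Z}_h^{(k),\perp}$ from condition 3 with $j=k$, not from \eqref{eq:Poincare}. Condition 3 gives $\|\eta\|_{H\Lambda^{(k)}(\Omega)}\le\beta_k^{-1}\|\mathrm{d}\eta\|_{L^2\Lambda^{(k+1)}(\Omega)}$ for all $\eta\in\mathcal{Z}_h^{(k),\perp}$. By contrast, \eqref{eq:Poincare} concerns $\mathcal{Z}^{(k),\perp}$, and $\mathcal{Z}_h^{(k),\perp}\not\subset\mathcal{Z}^{(k),\perp}$ in general.

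\emph{When condition 3 with $j=k+1$ fails.} Your remark on this is inaccurate. Surjectivity of $\mathrm{d}$ onto $H\Lambda_h^{(k+2)}(\Omega)$ forces every element of that space to be closed. So the condition fails whenever $H\Lambda_h^{(k+2)}(\Omega)$ contains a non-closed form, which is generic for $k+2<n$ regardless of topology. For $k+2=n$ it fails exactly when $\mathcal{H}_h^{(n)}\neq\{0\}$. For $k+2=n+1$ it holds vacuously, because the space is $\{0\}$.
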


\begin{remark}
    Note that the supremum in \eqref{eq:infsup} may be restricted to
    $\mathcal{Z}_h^{(j),\perp}$,
    since the supremum will never be achieved for $\kForm{\eta}{j}\in \mathcal{Z}_h^{(j)}$.
\end{remark}

\begin{lemma}[Coercivity in $\mathcal{Z}^{(k)}$]
The bilinear form
$\bilnear{\kForm{\eta}{k}}{\kForm{\lambda}{k}}$
is bounded and coercive on
$\mathcal{Z}_h^{(k)}$
with coercivity constant equal to one.
\end{lemma}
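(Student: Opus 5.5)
Boundedness and coercivity will both follow from one observation: on $\mathcal{Z}_h^{(k)}$ the graph norm reduces to the $L^2$ norm. For any $\kForm{\eta}{k}\in\mathcal{Z}_h^{(k)}$ we have $\mathrm{d}\kForm{\eta}{k}=0$. Hence
\[
\normHk{\kForm{\eta}{k}}{k}^2=\normLtwo{\kForm{\eta}{k}}{k}^2+\normLtwo{\mathrm{d}\kForm{\eta}{k}}{k+1}^2=\normLtwo{\kForm{\eta}{k}}{k}^2 .
\]
So the bilinear form $\bilnear{\kForm{\eta}{k}}{\kForm{\lambda}{k}}$ is just the $H\Lambda^{(k)}$ inner product restricted to closed forms.

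For boundedness, take arbitrary $\kForm{\eta}{k},\kForm{\lambda}{k}\in H\Lambda_h^{(k)}(\Omega)$ and apply Cauchy--Schwarz in $L^2$:
\[
|\bilnear{\kForm{\eta}{k}}{\kForm{\lambda}{k}}|\le\normLtwo{\kForm{\eta}{k}}{k}\normLtwo{\kForm{\lambda}{k}}{k}\le\normHk{\kForm{\eta}{k}}{k}\normHk{\kForm{\lambda}{k}}{k}.
\]
This gives a bound with continuity constant one. It holds on the whole of $H\Lambda_h^{(k)}(\Omega)$, so in particular on $\mathcal{Z}_h^{(k)}$.

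For coercivity, take $\kForm{\eta}{k}\in\mathcal{Z}_h^{(k)}$ and use the identity above:
\[
\bilnear{\kForm{\eta}{k}}{\kForm{\eta}{k}}=\normLtwo{\kForm{\eta}{k}}{k}^2=\normHk{\kForm{\eta}{k}}{k}^2 .
\]
The coercivity constant is therefore exactly one, with equality and no loss.

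I see no real obstacle here. The only point to state explicitly is that coercivity is claimed only on the kernel $\mathcal{Z}_h^{(k)}$ of the constraint operator, which is the standard kernel-coercivity hypothesis in the Brezzi framework \cite{Boffi2013MixedApplications}. Off this kernel the form controls only the $L^2$ part, so coercivity in the full graph norm fails; that is why the restriction matters. I would also note that the constant is independent of $h$, since no discrete inequality is used.
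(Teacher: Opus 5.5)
Your proof is correct and follows the paper's argument: on $\mathcal{Z}_h^{(k)}$ the condition $\mathrm{d}\kForm{\eta}{k}=0$ collapses $\normHk{\cdot}{k}$ to $\normLtwo{\cdot}{k}$, which gives coercivity with constant one. Your boundedness step (Cauchy--Schwarz followed by $\normLtwo{\cdot}{k}\le\normHk{\cdot}{k}$) is actually a sounder justification than the paper's, which only asserts that the form is bounded because it is symmetric.
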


\begin{proof}
The bilinear form is symmetric and therefore, trivially bounded. Moreover, for any
$\overline{\lambda}_0^{(k)}
\in
\mathcal{Z}_h^{(k)}$
we have
\[
\bilnear{\overline{\lambda}_0^{(k)}}{\overline{\lambda}_0^{(k)}} 
=
\normLtwo{\overline{\lambda}_0^{(k)}}{k}^2
=
\normHk{\overline{\lambda}_0^{(k)}}{k}^2 \;.
\]
Hence, the bilinear form is coercive on
$\mathcal{Z}_h^{(k)}\subset \mathcal{Z}^{(k)}$
with coercivity constant equal to one.
\end{proof}

\begin{lemma}[Inf-sup condition on the harmonic space]
The harmonic bilinear form satisfies the inf-sup condition with
$\beta_{\mathcal H}=1$.
\end{lemma}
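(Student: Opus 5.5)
The plan is to exhibit, for each fixed $\kForm{h}{k+1}\in\mathcal{H}_h^{(k+1)}$, an explicit test function that attains the ratio one in the inf-sup quotient, namely $\kForm{\eta}{k+1}=\kForm{h}{k+1}$ itself. This choice is admissible because $\mathcal{H}_h^{(k+1)}\subset\mathcal{Z}_h^{(k+1)}\subset H\Lambda_h^{(k+1)}(\Omega)$. The supremum over $H\Lambda_h^{(k+1)}(\Omega)$ is then bounded from below by the value of the quotient at this particular $\kForm{\eta}{k+1}$.

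The key observation, mirroring the proof of the coercivity lemma above, is that harmonic forms are closed. Since $\kForm{h}{k+1}\in\mathcal{Z}_h^{(k+1)}$ we have $\mathrm{d}\kForm{h}{k+1}=0$, and hence
\[
\|\kForm{h}{k+1}\|_{H\Lambda_h^{(k+1)}(\Omega)}^2=\normLtwo{\kForm{h}{k+1}}{k+1}^2+\normLtwo{\mathrm{d}\kForm{h}{k+1}}{k+2}^2=\normLtwo{\kForm{h}{k+1}}{k+1}^2 .
\]
Here the norm on $H\Lambda_h^{(k+1)}(\Omega)$ is understood as the restriction of the $H\Lambda^{(k+1)}$ graph norm, which is the setting used throughout the paper. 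Substituting gives
\[
\frac{\bilnear{\kForm{h}{k+1}}{\kForm{h}{k+1}}}{\|\kForm{h}{k+1}\|_{H\Lambda_h^{(k+1)}(\Omega)}\,\|\kForm{h}{k+1}\|_{H\Lambda_h^{(k+1)}(\Omega)}}
=\frac{\normLtwo{\kForm{h}{k+1}}{k+1}^2}{\normLtwo{\kForm{h}{k+1}}{k+1}^2}=1
\]
for every nonzero $\kForm{h}{k+1}$. Taking the infimum over $\mathcal{H}_h^{(k+1)}\setminus\{0\}$ then yields $\beta_{\mathcal H}\geq 1$, and this bound is independent of the mesh size.

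To see that the constant is exactly one, I would add an upper bound. By Cauchy--Schwarz, $\bilnear{\kForm{\eta}{k+1}}{\kForm{h}{k+1}}\leq \normLtwo{\kForm{\eta}{k+1}}{k+1}\normLtwo{\kForm{h}{k+1}}{k+1}$. Since $\normLtwo{\kForm{\eta}{k+1}}{k+1}\leq\normHk{\kForm{\eta}{k+1}}{k+1}$ and $\normLtwo{\kForm{h}{k+1}}{k+1}=\normHk{\kForm{h}{k+1}}{k+1}$, the quotient never exceeds one, so the inf-sup constant equals one.

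There is essentially no obstacle in this argument. The only points needing care are to confirm that $\kForm{h}{k+1}$ is an admissible test function, which holds by conformity and the inclusion $\mathcal{H}_h^{(k+1)}\subset H\Lambda_h^{(k+1)}(\Omega)$, and to use closedness so that the graph norm collapses to the $L^2$ norm. The degenerate case $\mathcal{H}_h^{(k+1)}=\{0\}$, as on contractible domains, makes the condition vacuous.
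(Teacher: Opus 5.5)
Your proof is correct and is essentially the paper's argument: test with $\eta^{(k+1)}=h^{(k+1)}$, then use $\mathrm{d}h^{(k+1)}=0$ so that the graph norm of $h^{(k+1)}$ reduces to its $L^2$ norm and the quotient equals one. Your extra Cauchy--Schwarz upper bound, which shows the constant is exactly one rather than only at least one, is a small refinement that the paper does not include.
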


\begin{proof}
For any
$\kForm{h}{k+1}\in\mathcal{H}_h^{(k+1)}$
we may choose
$\kForm{\eta}{k+1}=\kForm{h}{k+1}$.
Then we have
\[
\sup_{\kForm{\eta}{k+1}\in H\Lambda_h^{(k+1)}(\Omega)} \frac{\bilnear{\kForm{\eta}{k+1}}{\kForm{h}{k+1}}}{\| \kForm{\eta}{k+1} \|_{H\Lambda_h^{(k+1)}(\Omega)}}  \geq   \frac{\bilnear{\kForm{h}{k+1}}{\kForm{h}{k+1}}}{\| \kForm{h}{k+1} \|_{H\Lambda_h^{(k+1)}(\Omega)}}
 = \normHk{\kForm{h}{k+1}}{k+1} \;,
\]
where we used the fact that $\normHk{h}{k+1}=\normLtwo{h}{k+1}$. Thus the inf-sup condition holds with
$\beta_{\mathcal H}=1$.
\end{proof}

\begin{lemma}[Inf-sup condition for the exterior derivative]
The inf-sup condition \eqref{eq:infsup} holds for $j=k$ with
$\beta_k=\frac{1}{C_P}$.
\end{lemma}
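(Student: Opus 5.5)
The idea is to use the exterior derivative of the test form as the trial form, so that the bilinear form $\bilnear{\mathrm{d}\kForm{\eta}{k}}{\kForm{\lambda}{k+1}}$ becomes an $L^2$ norm squared, and then to close the estimate with the Poincar\'e inequality \eqref{eq:Poincare}.

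I read \eqref{eq:infsup} in the usual Brezzi form. Taken literally, with the infimum over all of $H\Lambda_h^{(k+1)}(\Omega)$, it cannot hold: any $\kForm{\lambda}{k+1}$ that is $L^2$-orthogonal to $\mathrm{d}H\Lambda_h^{(k)}(\Omega)$ makes the numerator vanish. Such components of $\lambda^{(k+1)}$ are controlled by the third and fourth equations of \eqref{eq:BCP}, not by this pairing. I will therefore prove the equivalent statement with the roles of the two arguments exchanged:
\[
\inf_{\kForm{\eta}{k}\in\mathcal{Z}_h^{(k),\perp}}\ \sup_{\kForm{\lambda}{k+1}\in H\Lambda_h^{(k+1)}(\Omega)}
\frac{\bilnear{\mathrm{d}\kForm{\eta}{k}}{\kForm{\lambda}{k+1}}}{\normHk{\kForm{\eta}{k}}{k}\,\normHk{\kForm{\lambda}{k+1}}{k+1}} \geq \frac{1}{C_P}.
\]
This is the same as \eqref{eq:infsup} with the infimum over $\kForm{\lambda}{k+1}\in\mathrm{d}H\Lambda_h^{(k)}(\Omega)$.

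Fix $\kForm{\eta}{k}\in\mathcal{Z}_h^{(k),\perp}$ with $\kForm{\eta}{k}\neq 0$. Then $\mathrm{d}\kForm{\eta}{k}\neq0$, and by the subcomplex property $\kForm{\lambda}{k+1}:=\mathrm{d}\kForm{\eta}{k}\in H\Lambda_h^{(k+1)}(\Omega)$ is an admissible trial form. Because $\mathrm{d}\mathrm{d}=0$, we have $\normHk{\mathrm{d}\kForm{\eta}{k}}{k+1}=\normLtwo{\mathrm{d}\kForm{\eta}{k}}{k+1}$. With this choice the quotient equals
\[
\frac{\normLtwo{\mathrm{d}\kForm{\eta}{k}}{k+1}^2}{\normHk{\kForm{\eta}{k}}{k}\,\normLtwo{\mathrm{d}\kForm{\eta}{k}}{k+1}}=\frac{\normLtwo{\mathrm{d}\kForm{\eta}{k}}{k+1}}{\normHk{\kForm{\eta}{k}}{k}},
\]
and \eqref{eq:Poincare} bounds this from below by $1/C_P$.

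The main obstacle is justifying \eqref{eq:Poincare} for $\kForm{\eta}{k}\in\mathcal{Z}_h^{(k),\perp}$. The inequality was stated on $\mathcal{Z}^{(k),\perp}$, and in general $\mathcal{Z}_h^{(k),\perp}\not\subset\mathcal{Z}^{(k),\perp}$. The reason is that $\kForm{\eta}{k}$ is only orthogonal to discrete closed forms, not to all closed forms. My plan is to decompose $\kForm{\eta}{k}=\kForm{\eta}{k}_0+\kForm{\eta}{k}_\perp$ with $\kForm{\eta}{k}_0\in\mathcal{Z}^{(k)}$ and $\kForm{\eta}{k}_\perp\in\mathcal{Z}^{(k),\perp}$, so that $\mathrm{d}\kForm{\eta}{k}=\mathrm{d}\kForm{\eta}{k}_\perp$. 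Since $\kForm{\eta}{k}\perp\mathcal{Z}_h^{(k)}$, $\kForm{\eta}{k}$ is the $L^2$-minimal element of $\kForm{\eta}{k}+\mathcal{Z}_h^{(k)}$. It therefore suffices to bound $\|\kForm{\eta}{k}\|$ by $\normLtwo{\mathrm{d}\kForm{\eta}{k}}{k+1}$ through some discrete element with the same exterior derivative.

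In the setting of the paper I would simply take $C_P$ to denote the (discrete) Poincar\'e constant valid on $\mathcal{Z}_h^{(k),\perp}$. That interpretation makes the argument above complete with $\beta_k=1/C_P$. I would add a remark that uniformity of this constant in $h$ is the standard discrete Poincar\'e inequality of \cite{AFW10}, which holds for subcomplexes admitting a bounded cochain projection.
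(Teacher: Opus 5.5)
Your argument is correct, but it runs in the transposed direction from the paper's. The paper fixes the multiplier. It uses the third and fourth equations of \eqref{eq:BCP} to place $\kForm{\lambda}{k+1}$ in $\mathcal{R}_h^{(k+1)}$, which is the same restriction you correctly identify as necessary. It then picks a discrete preimage $\kForm{\overline{\eta}}{k}$ of $\kForm{\lambda}{k+1}$ under $\mathrm{d}$ and inserts it in the supremum. You instead fix $\kForm{\eta}{k}\in\mathcal{Z}_h^{(k),\perp}$ and test with $\kForm{\lambda}{k+1}=\mathrm{d}\kForm{\eta}{k}$. The ingredients coincide: the isomorphism $\mathrm{d}:\mathcal{Z}_h^{(k),\perp}\to\mathcal{R}_h^{(k+1)}$ plus a Poincar\'e bound on $\mathcal{Z}_h^{(k),\perp}$. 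Your orientation, however, costs a step you only assert, namely that exchanging inf and sup does not change the constant. That step is true here, because both spaces are finite-dimensional and $\mathrm{d}$ is a bijection between them. Both constants therefore equal the reciprocal of the norm of $\mathrm{d}^{-1}:\mathcal{R}_h^{(k+1)}\to\mathcal{Z}_h^{(k),\perp}$. The paper's direction, with $\kForm{\overline{\eta}}{k}$ taken as the preimage lying in $\mathcal{Z}_h^{(k),\perp}$, gives the stated ordering directly and avoids this step. On the Poincar\'e constant you are more careful than the paper. The paper applies \eqref{eq:Poincare} to the discrete form $\kForm{\overline{\eta}}{k}$ without noting that $\mathcal{Z}_h^{(k),\perp}\not\subset\mathcal{Z}^{(k),\perp}$, so it tacitly uses the discrete constant exactly as you propose. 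One caution about your closing remark: \cite{AFW10} derives the $h$-uniform discrete Poincar\'e inequality from the existence of a bounded cochain projection, which is what this paper sets out to construct. Invoking it for uniformity would therefore be circular here. The lemma itself only needs the discrete constant to be finite, which is automatic.
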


\begin{proof}
The third equation of \eqref{eq:BCP} implies that
$\kForm{\lambda}{k+1}\in\mathcal{Z}_h^{(k+1)}$,
while the fourth equation implies that
$\kForm{\lambda}{k+1}$
is orthogonal to
$\mathcal{H}_h^{(k+1)}$.
We thus conclude that 
\(
\kForm{\lambda}{k+1}
\in
\mathcal{R}_h^{(k+1)}.
\)
Therefore there exists
$\kForm{\overline{\eta}}{k}
\in
H\Lambda_h^{(k)}(\Omega)$
such that
\(
\mathrm{d}\kForm{\overline{\eta}}{k}
=
\kForm{\lambda}{k+1}.
\)
Using this in the supremum gives
\[
\sup_{\kForm{\eta}{k}\in\mathcal{Z}_h^{(k),\perp}}
\frac{
\bilnear{\mathrm{d}\kForm{\eta}{k}}
        {\kForm{\lambda}{k+1}}
}{
\normHk{\kForm{\eta}{k}}{k}
}
\geq
\frac{
\bilnear{\mathrm{d}\kForm{\overline{\eta}}{k}}
        {\kForm{\lambda}{k+1}}
}{
\normHk{\kForm{\overline{\eta}}{k}}{k}
}.
\]
The Poincaré inequality tells us
\(
\normHk{\kForm{\overline{\eta}}{k}}{k}
\leq
C_P
\normLtwo{\kForm{\lambda}{k+1}}{k+1},
\)
and therefore
\[
\sup_{\kForm{\eta}{k}\in\mathcal{Z}_h^{(k),\perp}}
\frac{
\bilnear{\mathrm{d}\kForm{\eta}{k}}
        {\kForm{\lambda}{k+1}}
}{
\normHk{\kForm{\eta}{k}}{k}
}
\geq
\frac{1}{C_P}
\frac{
\normLtwo{\kForm{\lambda}{k+1}}{k+1}^2
}{
\normHk{\kForm{\lambda}{k+1}}{k+1}
} \geq \frac{1}{C_P} \normHk{ \lambda^{(k+1)} }{k+1}.
\]
where we used $\normLtwo{\lambda^{(k+1)}}{k+1}=\normHk{ \lambda^{(k+1)}}{k+1}$. So \eqref{eq:infsup} is satisfied for $j=k$.
\end{proof}

\begin{theorem}[Existence and uniqueness]\label{the:well_posed}
The system \eqref{eq:BCP} admits a solution.
The variables
$\kForm{\lambda}{k}$,
$\kForm{\lambda}{k+1}$,
and
$\kForm{h}{k+1}$
are uniquely determined, whereas
$\kForm{\lambda}{k+2}$
is determined only up to an element of
$\mathcal{R}_h^{(k+2),\perp}$.
\end{theorem}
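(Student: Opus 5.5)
Since \eqref{eq:BCP} is a square linear system on the finite-dimensional space $H\Lambda_h^{(k)}\times H\Lambda_h^{(k+1)}\times H\Lambda_h^{(k+2)}\times\mathcal{H}_h^{(k+1)}$ and is symmetric, I would prove existence via the Fredholm alternative. The system is solvable if and only if the right-hand side annihilates the kernel of the (symmetric) operator. So the plan is to characterise the kernel explicitly, and then check that the data $(\eta^{(k)},\phi^{(k)})$, $(\eta^{(k+1)},\mathrm{d}\phi^{(k)})$, $0$, $0$ vanish on it.

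\textbf{Step 1 (kernel).} Set $\phi^{(k)}=0$ and let $(\lambda^{(k)},\lambda^{(k+1)},\lambda^{(k+2)},h^{(k+1)})$ solve the homogeneous problem.
\begin{itemize}
\item As in the proof of the inf-sup lemma for $j=k$, the third and fourth equations give $\lambda^{(k+1)}\in\mathcal{R}_h^{(k+1)}$, so $\lambda^{(k+1)}=\mathrm{d}\bar\eta^{(k)}$ for some $\bar\eta^{(k)}$.
\item Test the first equation with $\eta^{(k)}=\lambda^{(k)}$ and the second with $\eta^{(k+1)}=\lambda^{(k+1)}$. Since $\mathrm{d}\lambda^{(k+1)}=0$ and $(\lambda^{(k+1)},h^{(k+1)})=0$, the second gives $(\lambda^{(k+1)},\mathrm{d}\lambda^{(k)})=0$.
\item Substituting into the first gives $\|\lambda^{(k)}\|^2=0$, hence $\lambda^{(k)}=0$.
\item The first equation then reads $(\mathrm{d}\eta^{(k)},\lambda^{(k+1)})=0$ for all $\eta^{(k)}$. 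Taking $\eta^{(k)}=\bar\eta^{(k)}$ gives $\lambda^{(k+1)}=0$. (This is exactly the inf-sup lemma for $j=k$ combined with coercivity.)
\item Testing the second equation with $\eta^{(k+1)}=h^{(k+1)}$, and using that $h^{(k+1)}$ is closed, gives $\|h^{(k+1)}\|^2=0$.
\item What remains is $(\mathrm{d}\eta^{(k+1)},\lambda^{(k+2)})=0$ for all $\eta^{(k+1)}\in H\Lambda_h^{(k+1)}$. Equivalently, $\lambda^{(k+2)}\perp\mathcal{R}_h^{(k+2)}$, i.e.\ $\lambda^{(k+2)}\in\mathcal{R}_h^{(k+2),\perp}$.
\end{itemize}
Conversely, any such $\lambda^{(k+2)}$ with all other components zero solves the homogeneous system. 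Hence the kernel is $\{0\}\times\{0\}\times\mathcal{R}_h^{(k+2),\perp}\times\{0\}$. This already yields the uniqueness statement: $\lambda^{(k)}$, $\lambda^{(k+1)}$ and $h^{(k+1)}$ are unique, and $\lambda^{(k+2)}$ is unique modulo $\mathcal{R}_h^{(k+2),\perp}$. Note that the $j=k+1$ inf-sup condition \eqref{eq:infsup} fails precisely on this space, which is why it is not proven.

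\textbf{Step 2 (compatibility).} By symmetry of the bilinear form, the range of the operator is the annihilator of the kernel. A kernel element pairs with the right-hand side only through the third equation, where the data is $(\eta^{(k+2)},0)=0$. The data is therefore compatible, and a solution exists.

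The only delicate point is the symmetric-structure bookkeeping: one must correctly identify which test slot pairs with which unknown, so that the kernel and cokernel coincide. Beyond that, the argument is routine.
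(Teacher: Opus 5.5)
Your proposal is correct and follows the same route as the paper. The system \eqref{eq:BCP} is symmetric, so its left and right kernels coincide, and the right-hand side vanishes when $\eta^{(k)}=\eta^{(k+1)}=\gamma^{(k+1)}=0$ and $\eta^{(k+2)}\in\mathcal{R}_h^{(k+2),\perp}$; hence a solution exists and $\lambda^{(k+2)}$ is fixed only modulo $\mathcal{R}_h^{(k+2),\perp}$.

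The one difference is how the kernel is obtained. You compute it explicitly by direct testing and show it is exactly $\{0\}\times\{0\}\times\mathcal{R}_h^{(k+2),\perp}\times\{0\}$. The paper only exhibits this singular direction and cites the preceding coercivity and inf-sup lemmas for uniqueness of $\lambda^{(k)}$, $\lambda^{(k+1)}$, $h^{(k+1)}$, so your version makes that step self-contained.
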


\begin{proof}

The inf-sup relation, \eqref{eq:infsup}, is not satisfied for $j=k+1$. We can arbitrarily add elements from $\mathcal{R}_h^{(k+2),\perp}$ to $\kForm{\lambda}{k+2}$ without modifying the term $\bilnear{\mathrm{d} \kForm{\eta}{k + 1}}{\kForm{\lambda}{k + 2}}$ in \eqref{eq:BCP}.
The system \eqref{eq:BCP} is singular. For a solution to exist, the right-hand side vector must be in the range of the matrix in \eqref{eq:BCP}. Due to the symmetry of the system \eqref{eq:BCP}, we see that for $\eta^{(k)}=\eta^{(k+1)}=\gamma^{(k+1)}=0$ and $\eta^{(k+2)} \in \mathcal{R}_h^{(k+2),\perp}$ the right-hand side vector vanishes and therefore the right-hand side vector is in the range of the singular matrix.

Therefore, a solution exists. The preceding lemmas imply the uniqueness of
$\kForm{\lambda}{k}$,
$\kForm{\lambda}{k+1}$,
and
$\kForm{h}{k+1}$,
while
$\kForm{\lambda}{k+2}$
remains unique only modulo
$\mathcal{R}_h^{(k+2),\perp}$.
\end{proof}

\begin{remark}
    The singularity in \eqref{eq:BCP} can be removed by adding additional constraints and Lagrange multipliers, but that is not necessary for the purpose of this paper.
\end{remark}

\section{Properties of the augmented system}\label{sec:BCP}
Having proven the existence of a solution of the augmented system in the previous section, we now present the key properties of the system. Let
\[
\Pi^{(k)} :
H\Lambda^{(k)}(\Omega)
\rightarrow
H\Lambda_h^{(k)}(\Omega)
\]
be the linear operator defined as: For a given $\kForm{\phi}{k} \in H\kForm{\Lambda}{k}(\Omega)$, solve  \eqref{eq:BCP} and set $\Pi^{(k)} \kForm{\phi}{k} = \kForm{\lambda}{k}$. 


\begin{lemma}[Idempotence]\label{lem:idem}
The operator
\(
\Pi^{(k)} :
H\Lambda^{(k)}(\Omega)
\rightarrow
H\Lambda_h^{(k)}(\Omega)
\)
defines a projection onto $H\Lambda_h^{(k)}(\Omega)$. In particular,
\begin{equation}
    \Pi^{(k)} \Pi^{(k)} \kForm{\phi}{k} = \Pi^{(k)} \kForm{\phi}{k}, \quad \forall \kForm{\phi}{k} \in H\kForm{\Lambda}{k}(\Omega) \quad  \Longrightarrow \quad \Pi^{(k)} \kForm{\lambda}{k} = \kForm{\lambda}{k}, \quad \forall \kForm{\lambda}{k} \in H\kForm{\Lambda_h}{k}(\Omega).
\end{equation}
\end{lemma}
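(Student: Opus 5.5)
The plan is to prove the second statement directly: for every $\kForm{\phi}{k}\in H\Lambda_h^{(k)}(\Omega)$ the solution of \eqref{eq:BCP} has $\kForm{\lambda}{k}=\kForm{\phi}{k}$. Idempotence then follows at once, since $\Pi^{(k)}\kForm{\phi}{k}\in H\Lambda_h^{(k)}(\Omega)$ for every $\kForm{\phi}{k}\in H\Lambda^{(k)}(\Omega)$. Because Theorem~\ref{the:well_posed} gives uniqueness of $\kForm{\lambda}{k}$, $\kForm{\lambda}{k+1}$ and $\kForm{h}{k+1}$, it is enough to exhibit one solution of \eqref{eq:BCP} with $\kForm{\lambda}{k}=\kForm{\phi}{k}$.

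Given discrete data $\kForm{\phi}{k}\in H\Lambda_h^{(k)}(\Omega)$, I take the candidate
\[
\kForm{\lambda}{k}=\kForm{\phi}{k},\qquad \kForm{\lambda}{k+1}=0,\qquad \kForm{\lambda}{k+2}=0,\qquad \kForm{h}{k+1}=0,
\]
and check the four equations in turn.

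\begin{enumerate}
\item The first equation reduces to $\bilnear{\kForm{\eta}{k}}{\kForm{\phi}{k}}=\bilnear{\kForm{\eta}{k}}{\kForm{\phi}{k}}$, which holds trivially.
\item The second equation becomes $\bilnear{\kForm{\eta}{k+1}}{\mathrm{d}\kForm{\phi}{k}}=\bilnear{\kForm{\eta}{k+1}}{\mathrm{d}\kForm{\phi}{k}}$, which also holds.
\item The third and fourth equations are satisfied because $\kForm{\lambda}{k+1}=0$.
\end{enumerate}

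The candidate is admissible because $\kForm{\phi}{k}\in H\Lambda_h^{(k)}(\Omega)$ and all other unknowns lie in their discrete spaces (zero is in each). Uniqueness then forces $\Pi^{(k)}\kForm{\phi}{k}=\kForm{\phi}{k}$.

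The only delicate point is that $\kForm{\lambda}{k+2}$ is not unique. I would stress that this does not affect the conclusion. By Theorem~\ref{the:well_posed}, any other solution differs from the candidate only by an element of $\mathcal{R}_h^{(k+2),\perp}$ in the $\kForm{\lambda}{k+2}$ component, while $\kForm{\lambda}{k}$ stays fixed. Hence $\Pi^{(k)}$ is well defined as a linear map, and it is the identity on $H\Lambda_h^{(k)}(\Omega)$.

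I expect no real obstacle. The argument is immediate once the candidate solution has been written down.
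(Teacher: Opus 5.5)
Your proposal is correct and follows the paper's proof essentially verbatim. You exhibit the same candidate solution $\lambda^{(k)}=\phi^{(k)}$, $\lambda^{(k+1)}=0$, $h^{(k+1)}=0$, $\lambda^{(k+2)}=0$ and invoke the uniqueness of $\lambda^{(k)}$ from Theorem~\ref{the:well_posed}; your extra remarks (idempotence follows because the range lies in $H\Lambda_h^{(k)}(\Omega)$, and the non-uniqueness of $\lambda^{(k+2)}$ is harmless) only make explicit what the paper leaves implicit.
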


\begin{proof}
Let
\(
\kForm{\phi}{k}
\in
H\Lambda_h^{(k)}(\Omega)
\)
and consider the augmented system in \eqref{eq:BCP}. It is straightforward to verify that
\[
\kForm{\lambda}{k}
=
\kForm{\phi}{k},
\qquad
\kForm{\lambda}{k+1}
=
0,
\qquad
\kForm{h}{k+1}
=
0,
\qquad
\kForm{\lambda}{k+2}
=
0,
\]
satisfies all equations of \eqref{eq:BCP}. Hence,
\(
\kForm{\phi}{k}
\)
is \emph{a} solution of the problem. From Theorem~\ref{the:well_posed} we know that the solution for $\kForm{\lambda}{k}$ is unique, therefore 
it is the only solution, thus proving idempotence.
\end{proof}


\begin{lemma}[Boundedness]\label{lem:bound}
    The projection is uniformly bounded in the Sobolev norm.
\end{lemma}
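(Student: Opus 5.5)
The plan is to bound $\lambda^{(k)}=\Pi^{(k)}\phi^{(k)}$ directly from the equations of \eqref{eq:BCP}, by testing them with suitable discrete forms rather than invoking the abstract stability constants of the mixed theory. Decompose $\lambda^{(k)}=\lambda^{(k)}_0+\lambda^{(k)}_\perp$ with $\lambda^{(k)}_0\in\mathcal{Z}_h^{(k)}$ and $\lambda^{(k)}_\perp\in\mathcal{Z}_h^{(k),\perp}$. I will bound the two pieces separately, in terms of $\normLtwo{\phi^{(k)}}{k}$ and $\normLtwo{\mathrm{d}\phi^{(k)}}{k+1}$ respectively.

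\emph{Closed part.} In the first equation of \eqref{eq:BCP}, take $\eta^{(k)}\in\mathcal{Z}_h^{(k)}$. Then $\mathrm{d}\eta^{(k)}=0$, so the $\lambda^{(k+1)}$ term drops out and
\[
(\eta^{(k)},\lambda^{(k)}_0)=(\eta^{(k)},\lambda^{(k)})=(\eta^{(k)},\phi^{(k)}).
\]
The first equality uses $(\eta^{(k)},\lambda^{(k)}_\perp)=0$. Hence $\lambda^{(k)}_0$ is the $L^2$-orthogonal projection of $\phi^{(k)}$ onto $\mathcal{Z}_h^{(k)}$. Choosing $\eta^{(k)}=\lambda^{(k)}_0$ and applying Cauchy--Schwarz, and recalling that the $H$-norm equals the $L^2$-norm on closed forms, gives
\[
\normHk{\lambda^{(k)}_0}{k}=\normLtwo{\lambda^{(k)}_0}{k}\le\normLtwo{\phi^{(k)}}{k}.
\]

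\emph{Derivative.} In the second equation, take $\eta^{(k+1)}=\mathrm{d}\lambda^{(k)}\in\mathcal{R}_h^{(k+1)}\subset\mathcal{Z}_h^{(k+1)}$. The term with $\lambda^{(k+2)}$ vanishes because $\mathrm{d}\eta^{(k+1)}=0$. The term $(\mathrm{d}\lambda^{(k)},h^{(k+1)})$ vanishes because $h^{(k+1)}\in\mathcal{H}_h^{(k+1)}$ and the harmonic space is taken as the $L^2$-orthogonal complement of $\mathcal{R}_h^{(k+1)}$ in $\mathcal{Z}_h^{(k+1)}$. This is the Hodge-type reading of $\mathcal{Z}_h=\mathcal{R}_h\oplus\mathcal{H}_h$, and it is the same reading used in the inf-sup lemma for the exterior derivative. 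We obtain
\[
\normLtwo{\mathrm{d}\lambda^{(k)}}{k+1}^2=(\mathrm{d}\lambda^{(k)},\mathrm{d}\phi^{(k)}),
\qquad\text{hence}\qquad
\normLtwo{\mathrm{d}\lambda^{(k)}}{k+1}\le\normLtwo{\mathrm{d}\phi^{(k)}}{k+1}.
\]

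\emph{Co-closed part and conclusion.} Since $\mathrm{d}\lambda^{(k)}_\perp=\mathrm{d}\lambda^{(k)}$ and $\lambda^{(k)}_\perp\in\mathcal{Z}_h^{(k),\perp}$, the Poincar\'e inequality \eqref{eq:Poincare} gives
\[
\normHk{\lambda^{(k)}_\perp}{k}\le C_P\normLtwo{\mathrm{d}\phi^{(k)}}{k+1}.
\]
Here \eqref{eq:Poincare} is applied on the discrete complement, exactly as in the inf-sup lemma, with $C_P$ independent of $h$. Combining the two bounds by the triangle inequality,
\[
\normHk{\Pi^{(k)}\phi^{(k)}}{k}\le\normLtwo{\phi^{(k)}}{k}+C_P\normLtwo{\mathrm{d}\phi^{(k)}}{k+1}\le\sqrt{2}\,\max(1,C_P)\,\normHk{\phi^{(k)}}{k}.
\]
This bound is uniform in $h$. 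It also does not depend on which representative of $\lambda^{(k+2)}$ is chosen, since $\lambda^{(k+2)}$ never enters the estimates.

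\emph{Main obstacle.} The delicate step is justifying that the Poincar\'e constant on the discrete space $\mathcal{Z}_h^{(k),\perp}$ is independent of $h$. The inequality \eqref{eq:Poincare} is stated for $\mathcal{Z}^{(k),\perp}$, and $\mathcal{Z}_h^{(k),\perp}\not\subset\mathcal{Z}^{(k),\perp}$ in general. I would handle this as the earlier inf-sup lemma does, or, if a rigorous justification is needed, appeal to the discrete Poincar\'e inequality of \cite{AFW10}, which holds uniformly in $h$ when the subcomplex admits a bounded cochain projection. A second point to check is the orthogonality $\mathcal{H}_h^{(k+1)}\perp\mathcal{R}_h^{(k+1)}$, which is needed to kill the $h^{(k+1)}$ term.
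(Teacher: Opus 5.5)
Your proof is essentially the paper's: the same splitting $\lambda^{(k)}=\lambda^{(k)}_0+\lambda^{(k)}_\perp$, the same test functions $\eta^{(k)}=\lambda^{(k)}_0$ and $\eta^{(k+1)}=\mathrm{d}\lambda^{(k)}$, and the same application of \eqref{eq:Poincare} to $\lambda^{(k)}_\perp\in\mathcal{Z}_h^{(k),\perp}$. The paper also makes that last step without comment, so your caveat is well placed, though note that the uniform discrete Poincar\'e inequality of \cite{AFW10} presupposes a bounded cochain projection onto the discrete complex, which is the very object being constructed here. The paper combines the pieces by $L^2$-orthogonality rather than the triangle inequality. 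Your constant $\sqrt{2}\max(1,C_P)$ is sound, whereas the paper's stated $\sqrt{1+C_P}$ should read $\sqrt{1+C_P^2}$.
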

\begin{proof}
    Every $\kForm{\lambda}{k}$ can be uniquely decomposed into $\kForm{\lambda}{k}=\kForm{\lambda_0}{k}+\kForm{\lambda_\perp}{k}$ with $\kForm{\lambda_0}{k} \in \mathcal{Z}_h^{(k)}$ and $\kForm{\lambda_\perp}{k} \in \mathcal{Z}_h^{(k),\perp}$. The boundedness of the projection follows from two estimates:
    \begin{enumerate}
        \item \textbf{$L^2$-Boundedness in $\mathcal{Z}_h^{(k)}$}
        Take $\kForm{\eta}{k} = \kForm{\lambda_0}{k}$ in \eqref{eq:BCP}, then
        \[ \normLtwo{ \kForm{\lambda_0}{k} }{k}^2 = \left ( \kForm{\lambda_0}{k} , \kForm{\phi}{k}\right ) \leq \normLtwo{ \kForm{\lambda_0}{k} }{k} \normLtwo{  \kForm{\phi}{k} }{k} \quad \Longrightarrow \quad  \normLtwo{  \kForm{\lambda_0}{k} }{k} \leq \normLtwo{  \kForm{\phi}{k} }{k} \;. \]
        
        \item \textbf{$L^2$-Boundedness in $\mathcal{Z}_h^{(k),\perp}$}
        Take $\kForm{\eta}{k + 1} = \mathrm{d} \kForm{\lambda_{\perp}}{k}$ in \eqref{eq:BCP} gives
        \[
            \normLtwo{\mathrm{d}\kForm{\lambda_{\perp}}{k} }{k}^2 = \left (\mathrm{d}\kForm{\lambda_{\perp}}{k}, \mathrm{d}\kForm{\phi}{k} \right )
            \leq \normLtwo{\mathrm{d}\kForm{\lambda_{\perp}}{k} }{k} \normLtwo{\mathrm{d}\kForm{\phi}{k} }{k} \:\:\: \Longrightarrow \:\:\: \normLtwo{\mathrm{d}\kForm{\lambda_{\perp}}{k}}{k} \leq \normLtwo{\mathrm{d}\kForm{\phi}{k} }{k} \;.
        \]
    \end{enumerate}
    Using Poicar\'{e}'s inequality \eqref{eq:Poincare}
    \[  \normLtwo{ \kForm{\lambda_{\perp}}{k} }{k} \leq C_P  \normLtwo{ \mathrm{d} \kForm{\lambda_{\perp}}{k} }{k} 
     \leq C_P\normLtwo{ \mathrm{d} \kForm{\phi}{k} }{k} \;.
    \]
    Combining these estimates gives
    \begin{eqnarray}
         \normHk{\kForm{\lambda}{k}}{k}^2 & = & \normLtwo{ \kForm{\lambda}{k} }{k}^2 + \normLtwo{ \mathrm{d} \kForm{\lambda}{k} }{k}^2 \\
         & = & \normLtwo{ \kForm{\lambda_0}{k} }{k}^2 + \normLtwo{ \kForm{\lambda_{\perp}}{k} }{k}^2 + \normLtwo{ \mathrm{d} \kForm{\lambda_{\perp}}{k} }{k}^2 \nonumber \\
        & \leq & \normLtwo{ \kForm{\phi}{k} }{k}^2 + \left ( 1+C_P \right ) \normLtwo{ \mathrm{d} \kForm{\phi}{k} }{k}^2 \\
        & \leq & \left ( 1+C_P \right ) \left \{ \normLtwo{ \kForm{\phi}{k} }{k}^2 +  \normLtwo{ \mathrm{d} \kForm{\phi}{k} }{k}^2 \right \} \;, \nonumber
    \end{eqnarray}
    gives the bound for the projection
    \begin{equation}
        \normHk{ \kForm{\Pi}{k} \kForm{\phi}{k}}{k} \leq \sqrt{ 1+C_P} \normHk{ \kForm{\phi}{k}}{k} \;. \label{eq:sob_bound}
    \end{equation}
\end{proof}

Having established that
\(
\Pi^{(k)}
:
H\Lambda^{(k)}(\Omega)
\rightarrow
H\Lambda_h^{(k)}(\Omega)
\)
defines a bounded linear projector, we now examine the role of the harmonic variable
\(
\kForm{h}{k+1}
\in
\mathcal{H}_h^{(k+1)}
\)
appearing in \eqref{eq:BCP}. This variable acts as a Lagrange multiplier enforcing the constraint
\(
\kForm{\lambda}{k+1}
\in
\mathcal{R}_h^{(k+1)}.
\)
Moreover, it admits a natural geometric interpretation.

\begin{lemma}[Harmonic multiplier]\label{lem:harm}
The harmonic form
\(
\kForm{h}{k+1}
\in
\mathcal{H}_h^{(k+1)}
\)
defined by \eqref{eq:BCP} is the $L^2$-orthogonal projection of the exact form
\(
\mathrm d \kForm{\phi}{k}
\)
onto the discrete harmonic space
\(
\mathcal H_h^{(k+1)}
\).
\end{lemma}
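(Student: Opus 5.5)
We test the second equation of \eqref{eq:BCP} with a discrete harmonic form, $\kForm{\eta}{k+1}=\kForm{\gamma}{k+1}\in\mathcal{H}_h^{(k+1)}\subset H\Lambda_h^{(k+1)}(\Omega)$. The $L^2$-orthogonal projection of $\mathrm{d}\kForm{\phi}{k}$ onto $\mathcal{H}_h^{(k+1)}$ is the unique element $\kForm{p}{k+1}\in\mathcal{H}_h^{(k+1)}$ with $(\kForm{\gamma}{k+1},\kForm{p}{k+1})=(\kForm{\gamma}{k+1},\mathrm{d}\kForm{\phi}{k})$ for all $\kForm{\gamma}{k+1}\in\mathcal{H}_h^{(k+1)}$. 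So it suffices to show that with this choice of test function the second equation collapses to
\[
(\kForm{\gamma}{k+1},\kForm{h}{k+1})=(\kForm{\gamma}{k+1},\mathrm{d}\kForm{\phi}{k}),\qquad \forall\,\kForm{\gamma}{k+1}\in\mathcal{H}_h^{(k+1)}.
\]
Uniqueness of $\kForm{h}{k+1}$ (Theorem~\ref{the:well_posed}) then identifies $\kForm{h}{k+1}$ with that projection.

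The terms to eliminate are $(\kForm{\gamma}{k+1},\mathrm{d}\kForm{\lambda}{k})$ and $(\mathrm{d}\kForm{\gamma}{k+1},\kForm{\lambda}{k+2})$. The second vanishes because $\kForm{\gamma}{k+1}\in\mathcal{Z}_h^{(k+1)}$, so $\mathrm{d}\kForm{\gamma}{k+1}=0$. For the first, $\mathrm{d}\kForm{\lambda}{k}\in\mathcal{R}_h^{(k+1)}$. We need $\mathcal{H}_h^{(k+1)}$ to be $L^2$-orthogonal to $\mathcal{R}_h^{(k+1)}$, which gives $(\kForm{\gamma}{k+1},\mathrm{d}\kForm{\lambda}{k})=0$.

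This orthogonality is the one delicate point. The paper writes $\mathcal{Z}_h^{(k+1)}=\mathcal{R}_h^{(k+1)}\oplus\mathcal{H}_h^{(k+1)}$, and the harmonic forms there must be understood as the $L^2$-orthogonal complement of $\mathcal{R}_h^{(k+1)}$ in $\mathcal{Z}_h^{(k+1)}$. The literal set-theoretic definition is not even a subspace, and the preceding proofs already use this interpretation. For instance, the inf-sup lemma concluded $\kForm{\lambda}{k+1}\in\mathcal{R}_h^{(k+1)}$ from $\kForm{\lambda}{k+1}\in\mathcal{Z}_h^{(k+1)}$ and $\kForm{\lambda}{k+1}\perp\mathcal{H}_h^{(k+1)}$. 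I will state this interpretation explicitly, as in \cite{AFW10}.

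Once the identity is established, the lemma follows. As a by-product, I will note that $\mathrm{d}\kForm{\phi}{k}-\kForm{h}{k+1}$ is $L^2$-orthogonal to $\mathcal{H}_h^{(k+1)}$. In particular, $\kForm{h}{k+1}=0$ whenever $\mathrm{d}\kForm{\phi}{k}$ is orthogonal to the discrete harmonic forms, for example when the discretisation is topologically conforming. This prepares the later discussion of the commuting property.
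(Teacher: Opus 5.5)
Your proposal is correct and follows the paper's proof. You test the second equation of \eqref{eq:BCP} with an element of $\mathcal{H}_h^{(k+1)}$. You then drop the $\lambda^{(k+2)}$ term because harmonic forms are closed, and the $\mathrm{d}\lambda^{(k)}$ term by the $L^2$-orthogonality $\mathcal{R}_h^{(k+1)}\perp\mathcal{H}_h^{(k+1)}$; the paper uses this orthogonality implicitly, and you rightly make it explicit. (The appeal to Theorem~\ref{the:well_posed} is not needed: an element of $\mathcal{H}_h^{(k+1)}$ satisfying those relations for all test forms in $\mathcal{H}_h^{(k+1)}$ already is the orthogonal projection, by uniqueness of the latter.)
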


\begin{proof}
Choose
\(
\kForm{\eta}{k+1}
=
\kForm{\bar h}{k+1}
\in
\mathcal H_h^{(k+1)}
\)
in the second equation of \eqref{eq:BCP}. Since
\(
\mathrm d \kForm{\bar h}{k+1}=0
\),
and \(\bilnear{\kForm{\bar{h}}{k + 1}}{\mathrm{d}\kForm{\lambda}{k}} = 0\) the first two terms vanish, yielding
\[
\bilnear{\kForm{\bar h}{k+1}}
         {\kForm{h}{k+1}}
=
\bilnear{\kForm{\bar h}{k+1}}
         {\mathrm d\kForm{\phi}{k}},
\qquad
\forall
\kForm{\bar h}{k+1}
\in
\mathcal H_h^{(k+1)}\,
\]
and we have that
\[ \normHk{h^{(k+1)}}{k+1} \leq \normHk{\mathrm{d}\phi^{(k)}}{k+1} \;.\]
\end{proof}
At the infinite-dimensional and finite-dimensional level, exact forms are orthogonal to harmonic forms,
\[
\mathcal R^{(k+1)}
\perp
\mathcal H^{(k+1)}, \qquad  \mathcal R_h^{(k+1)}
\perp
\mathcal H_h^{(k+1)}.
\]
Consequently, if the finite-dimensional space is topologically conforming, i.e.
\(
\mathrm{gap}
\bigl(
\mathcal H^{(k+1)},
\mathcal H_h^{(k+1)}
\bigr)
=
0,
\)
then the orthogonal projection of the exact form
\(
\mathrm d\kForm{\phi}{k}
\)
onto
\(
\mathcal H_h^{(k+1)}
\)
vanishes, yielding
\(
\kForm{h}{k+1}
=
0.
\)
However, when the finite-dimensional space is not topologically conforming, the discrete harmonic space may contain components that are not orthogonal to exact forms. In that case,
\(
\kForm{h}{k+1}
\neq 0,
\)
and the harmonic multiplier measures the topological inconsistency between the infinite and finite-dimensional complexes. Naturally, $\kForm{h}{k + 1} = 0$ is trivial on contractible domains, for which
\(
\mathcal H^{(k+1)}=\{0\}
\).

\begin{lemma}[Commuting property]\label{lem:commute}
    Assume either (i) $\Omega$ is contractible, or (ii) $\mathrm{gap}\bigl(\mathcal H^{(k)},\mathcal H_h^{(k)}\bigr)=0$, i.e. the finite-dimensional sub-complex is topologically conforming. Then the projector commutes with the exterior derivative
\[
\mathrm d
\Pi^{(k)}
=
\Pi^{(k+1)}
\mathrm d.
\]
\end{lemma}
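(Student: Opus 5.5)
Fix $\kForm{\phi}{k}\in H\Lambda^{(k)}(\Omega)$ and let $(\kForm{\lambda}{k},\kForm{\lambda}{k+1},\kForm{\lambda}{k+2},\kForm{h}{k+1})$ solve \eqref{eq:BCP}. We compare this with the system \eqref{eq:BCP} written one level up, i.e. with $k$ replaced by $k+1$ and data $\kForm{\psi}{k+1}=\mathrm d\kForm{\phi}{k}$. Denote its unknowns by $(\kForm{\mu}{k+1},\kForm{\mu}{k+2},\kForm{\mu}{k+3},\kForm{g}{k+2})$, so that $\Pi^{(k+1)}\mathrm d\kForm{\phi}{k}=\kForm{\mu}{k+1}$. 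The plan is to produce an explicit solution of this second system whose first component is $\mathrm d\kForm{\lambda}{k}$, namely
\[
\kForm{\mu}{k+1}=\mathrm d\kForm{\lambda}{k},\qquad \kForm{\mu}{k+2}=0,\qquad \kForm{\mu}{k+3}=0,\qquad \kForm{g}{k+2}=0 .
\]
Uniqueness of the first component (Theorem~\ref{the:well_posed}, applied at level $k+1$) then gives $\Pi^{(k+1)}\mathrm d\kForm{\phi}{k}=\mathrm d\Pi^{(k)}\kForm{\phi}{k}$.

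\textbf{Equations three and four.} With $\kForm{\mu}{k+2}=0$, the third and fourth equations of the shifted system hold trivially.

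\textbf{Equation two.} The second equation requires
\[
(\kForm{\eta}{k+2},\mathrm d\mathrm d\kForm{\lambda}{k})=(\kForm{\eta}{k+2},\mathrm d\mathrm d\kForm{\phi}{k})
\quad\forall\,\kForm{\eta}{k+2}\in H\Lambda_h^{(k+2)}(\Omega).
\]
Both sides vanish because $\mathrm d\mathrm d=0$.

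\textbf{Equation one (the main step).} It remains to check
\[
(\kForm{\eta}{k+1},\mathrm d\kForm{\lambda}{k})=(\kForm{\eta}{k+1},\mathrm d\kForm{\phi}{k})
\quad\forall\,\kForm{\eta}{k+1}\in H\Lambda_h^{(k+1)}(\Omega),
\]
which says that $\mathrm d\kForm{\lambda}{k}$ is the $L^2$-projection of $\mathrm d\kForm{\phi}{k}$ onto $H\Lambda_h^{(k+1)}(\Omega)$. The second equation of the original system gives
\[
(\kForm{\eta}{k+1},\mathrm d\kForm{\lambda}{k})+(\mathrm d\kForm{\eta}{k+1},\kForm{\lambda}{k+2})+(\kForm{\eta}{k+1},\kForm{h}{k+1})=(\kForm{\eta}{k+1},\mathrm d\kForm{\phi}{k}),
\]
so we must show that $\kForm{h}{k+1}=0$ and that $(\mathrm d\kForm{\eta}{k+1},\kForm{\lambda}{k+2})=0$ for all $\kForm{\eta}{k+1}$.

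\emph{The harmonic multiplier vanishes.} By Lemma~\ref{lem:harm}, $\kForm{h}{k+1}$ is the $L^2$-projection of the exact form $\mathrm d\kForm{\phi}{k}$ onto $\mathcal H_h^{(k+1)}$. By the discussion following that lemma, this projection is zero on contractible domains (where $\mathcal H_h$ is also trivial) and in the topologically conforming case.

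\emph{The term with $\kForm{\lambda}{k+2}$ vanishes.} Test the original second equation with $\kForm{\eta}{k+1}=\mathrm d\kForm{\lambda}{k+1}$... this choice is not needed. Instead decompose
\[
\kForm{\eta}{k+1}=\kForm{\eta_0}{k+1}+\kForm{\eta_\perp}{k+1},\qquad \kForm{\eta_0}{k+1}\in\mathcal Z_h^{(k+1)},\ \kForm{\eta_\perp}{k+1}\in\mathcal Z_h^{(k+1),\perp}.
\]
The $\kForm{\eta_0}{k+1}$ part does not see $\kForm{\lambda}{k+2}$, since $\mathrm d\kForm{\eta_0}{k+1}=0$. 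For the $\kForm{\eta_\perp}{k+1}$ part we need $(\mathrm d\kForm{\eta_\perp}{k+1},\kForm{\lambda}{k+2})=0$, which is the obstacle. The plan is to show that $\kForm{\lambda}{k+2}$ can be taken equal to $0$. Theorem~\ref{the:well_posed} says $\kForm{\lambda}{k+2}$ is determined only modulo $\mathcal R_h^{(k+2),\perp}$. So it suffices to show that the tuple with $\kForm{\lambda}{k+2}=0$, $\kForm{h}{k+1}=0$ and the same $\kForm{\lambda}{k},\kForm{\lambda}{k+1}$ still solves \eqref{eq:BCP}. For that, test the original second equation with $\kForm{\eta}{k+1}=\mathrm d\kForm{\xi}{k}$, $\kForm{\xi}{k}\in H\Lambda_h^{(k)}(\Omega)$, and use the first equation to relate $(\mathrm d\kForm{\xi}{k},\mathrm d\kForm{\lambda}{k}-\mathrm d\kForm{\phi}{k})$ to $\kForm{\lambda}{k+1}$. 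The natural approach is the energy argument: choose $\kForm{\eta}{k+1}=\mathrm d\kForm{\eta_\perp}{k+1}$-type test functions, and compare the exact and $\mathcal Z_h^{(k+1),\perp}$ components of $\mathrm d\kForm{\phi}{k}-\mathrm d\kForm{\lambda}{k}$. This should show that the residual lies in $\mathcal Z_h^{(k+1)}\oplus\mathcal H_h^{(k+1)}$-orthogonal directions only through $\kForm{h}{k+1}$.

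I expect this step, showing that $(\mathrm d\kForm{\eta}{k+1},\kForm{\lambda}{k+2})=0$, to be the main difficulty. If it cannot be forced from the structure of \eqref{eq:BCP} alone, the fallback is to exploit the freedom in $\kForm{\lambda}{k+2}$. Since $\mathrm d\kForm{\phi}{k}$ is closed, the $\mathcal Z_h^{(k+1),\perp}$-component of the residual tested against $\mathrm d\kForm{\eta_\perp}{k+1}$ should vanish. The reason would be that $\mathrm d\kForm{\phi}{k}-\mathrm d\kForm{\lambda}{k}$ is approximately closed, combined with the inf-sup structure on the range $\mathrm dH\Lambda_h^{(k+1)}=\mathcal R_h^{(k+2)}$. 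This would give $\kForm{\lambda}{k+2}\in\mathcal R_h^{(k+2),\perp}$, equivalent to zero. Once equation one is verified, uniqueness completes the proof.
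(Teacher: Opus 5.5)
\textbf{There is a genuine gap, at exactly the step you flagged.} Your ansatz $\mu^{(k+2)}=0$ requires $(\mathrm d\eta^{(k+1)},\lambda^{(k+2)})=0$ for all $\eta^{(k+1)}$. This is false in general, even on contractible domains.

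To see this, test the second equation of \eqref{eq:BCP} with $\eta^{(k+1)}\in\mathcal Z_h^{(k+1),\perp}$. Both $\mathrm d\lambda^{(k)}$ and $h^{(k+1)}$ lie in $\mathcal Z_h^{(k+1)}$, so their terms drop and you get
\[
(\mathrm d\eta^{(k+1)},\lambda^{(k+2)})=(\eta^{(k+1)},\mathrm d\phi^{(k)}).
\]
A form in $\mathcal Z_h^{(k+1),\perp}$ is orthogonal only to \emph{discrete} closed forms, not to the continuous exact form $\mathrm d\phi^{(k)}$. So the right-hand side is generically nonzero. A concrete case: lowest-order Whitney forms on the reference triangle with $k=0$. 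There $\mathcal Z_h^{(1),\perp}$ is spanned by the rotation field $\eta=(-(y-y_c),x-x_c)$ about the centroid, and for $\phi=x^2$ one gets $(\eta,\nabla\phi)=1/36\neq 0$.

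Equivalently, with $\mu^{(k+2)}=0$ your first shifted equation forces $\mu^{(k+1)}$ to be the $L^2$-projection of $\mathrm d\phi^{(k)}$ onto all of $H\Lambda_h^{(k+1)}(\Omega)$. That projection is generally not closed, whereas $\mathrm d\lambda^{(k)}$ is.

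The non-uniqueness of $\lambda^{(k+2)}$ cannot rescue this. Adding elements of $\mathcal R_h^{(k+2),\perp}$ leaves $(\mathrm d\eta^{(k+1)},\lambda^{(k+2)})$ unchanged by construction, and the $\mathcal R_h^{(k+2)}$-component is uniquely determined and generically nonzero. The ``approximately closed'' heuristic therefore leads to a wrong conclusion.

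\textbf{The repair is small: carry the $\lambda^{(k+2)}$-term over instead of killing it.}
\begin{itemize}
\item Replace $\lambda^{(k+2)}$ by its $L^2$-projection onto $\mathcal R_h^{(k+2)}$. This is still a solution of \eqref{eq:BCP}, since $\lambda^{(k+2)}$ enters only through $(\mathrm d\eta^{(k+1)},\lambda^{(k+2)})$.
\item Take $(\mu^{(k+1)},\mu^{(k+2)},\mu^{(k+3)},g^{(k+2)})=(\mathrm d\lambda^{(k)},\lambda^{(k+2)},0,0)$.
\item The first shifted equation is then literally the second equation of \eqref{eq:BCP} with $h^{(k+1)}=0$.
\item The third and fourth shifted equations hold because an element of $\mathcal R_h^{(k+2)}$ is closed and orthogonal to $\mathcal H_h^{(k+2)}$.
\item The second shifted equation holds because $\mathrm d\mathrm d=0$.
\item Uniqueness of the first component then finishes the argument.
\end{itemize}

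The paper argues slightly differently, but it rests on the observation you made and then abandoned: closed test functions do not see $\lambda^{(k+2)}$. It tests both the shifted first equation and the original second equation only with $\eta^{(k+1)}\in\mathcal Z_h^{(k+1)}$, so both $\lambda^{(k+2)}$-terms vanish. Since $\mathrm d\lambda^{(k)}$ and $\Pi^{(k+1)}\mathrm d\phi^{(k)}$ are both discrete closed forms, this closed-part identity already gives
\[
\Pi^{(k+1)}\mathrm d\phi^{(k)}-\mathrm d\Pi^{(k)}\phi^{(k)}=h^{(k+1)}=0 .
\]
The $\mathcal Z_h^{(k+1),\perp}$-part of the test space, which you were trying to control, is never needed.
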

\begin{proof}
    We first consider the second case of a non-contractable domain with a topologically conforming sub-complex. If we were to insert $\mathrm{d}\kForm{\phi}{k}$ in \eqref{eq:BCP} instead of $\kForm{\phi}{k}$, we obtain
    \begin{equation}
    \left \{ \begin{array}{ccccccccc}
    \bilnear{\kForm{\eta}{k+1}}{\kForm{\underline{\lambda}}{k+1}} & + & \bilnear{\mathrm{d} \kForm{\eta}{k+1}}{\kForm{\underline{\lambda}}{k + 2}} & & & & & = & \bilnear{\kForm{\eta}{k+1}}{\mathrm{d}\kForm{\phi}{k}}  \\ [1.5ex]
    \bilnear{\kForm{\eta}{k + 2}}{\mathrm{d} \kForm{\underline{\lambda}}{k+1}} &  & + &  & \bilnear{\mathrm{d} \kForm{\eta}{k + 2}}{\kForm{\underline{\lambda}}{k + 3}}  & + &\bilnear{\kForm{\eta}{k + 2}}{\kForm{\underline{h}}{k + 2}} & = & \bilnear{\kForm{\eta}{k + 2}}{0} \\ [1.5ex]
      & & \bilnear{\kForm{\eta}{k + 3}}{\mathrm{d} \kForm{\underline{\lambda}}{k + 2}} & &  & & & = & \bilnear{\kForm{\eta}{k + 3}}{0} \\ [1.5ex]
     & & \bilnear{\kForm{\gamma}{k + 2}}{\kForm{\underline{\lambda}}{k + 2}} & & & & & = & \bilnear{\kForm{\gamma}{k + 2}}{0}
    \end{array} \right . \; .
\label{eq:BCP_dphi}
\end{equation}
In this equation, we underline the finite-dimensional forms on the left-hand side of \eqref{eq:BCP_dphi} to highlight that $\kForm{\lambda}{k+1}$ in \eqref{eq:BCP} is not necessarily the same as $\kForm{\underline{\lambda}}{k+1}$ in \eqref{eq:BCP_dphi}. We make a particular choice of $\kForm{\eta}{k+2}\in \mathcal{Z}_h^{(k+2)}$ in the second equation, in which case the term $\bilnear{\mathrm{d} \kForm{\eta}{k + 2}}{\kForm{\underline{\lambda}}{k + 3}}$ cancels out of this equation. As a consequence $\mathrm{d}\kForm{\underline{\lambda}}{k+1}=\kForm{\underline{h}}{k+2}=0$, because if $\kForm{\underline{h}}{k+2}$ were non-zero, we would have $-\mathrm{d}\kForm{\underline{\lambda}}{k+1} =\kForm{\underline{h}}{k+2}$ contradicting the definition of a harmonic form. So $\kForm{\underline{\lambda}}{k+1} = \mathrm{d}\kForm{\alpha}{k}$ for some $\kForm{\alpha}{k}\in H\Lambda_h^{(k)}(\Omega)$. If we insert this in the first equation of \eqref{eq:BCP_dphi} we have
\[ \bilnear{\kForm{\eta}{k+1}}{\mathrm{d}\kForm{\alpha}{k}}  +  \bilnear{\mathrm{d} \kForm{\eta}{k+1}}{\kForm{\underline{\lambda}}{k + 2}}    =  \bilnear{\kForm{\eta}{k+1}}{\mathrm{d}\kForm{\phi}{k}} \;. \]
If we subtract this equation from the second equation in \eqref{eq:BCP} we have
\[ \bilnear{\kForm{\eta}{k+1}}{\mathrm{d} ( \kForm{\alpha}{k} - \kForm{\lambda}{k})}  +  \bilnear{\mathrm{d} \kForm{\eta}{k+1}}{( \kForm{\underline{\lambda}}{k + 2} - \kForm{\lambda}{k+2} )}  -  \bilnear{\kForm{\eta}{k+1}}{ \kForm{h}{k+1} }   =  \bilnear{\kForm{\eta}{k+1}}{0} \;,\]
which needs to hold for all $\kForm{\eta}{k+1}\in H\Lambda_h^{(k+1)}(\Omega)$ and therefore also for $\kForm{\eta}{k+1}\in \mathcal{Z}_h^{(k+1)}$, in which case we have
\[ \bilnear{\kForm{\eta}{k+1}}{\mathrm{d} ( \kForm{\alpha}{k} - \kForm{\lambda}{k} )}      =  \bilnear{\kForm{\eta}{k+1}}{\kForm{h}{k+1} }, \qquad \forall \kForm{\eta}{k+1}\in \mathcal{Z}_h^{(k+1)} \;.\]
Following Lemma~\ref{lem:harm}, we know that $\kForm{h}{k+1} = 0$ if $\mathrm{gap}\bigl(\mathcal H^{(k)},\mathcal H_h^{(k)}\bigr)=0$ which thus yields $\mathrm{d}\kForm{\lambda}{k} = \mathrm{d}\kForm{\alpha}{k} = \kForm{\underline{\lambda}}{k+1} \Rightarrow{} \mathrm{d} \kForm{\Pi}{k} \kForm{\phi}{k} = \kForm{\Pi}{k + 1}\mathrm{d}\kForm{\phi}{k}$ and proves the commuting property. 

For the case of a contractable domain, $\kForm{h}{k+1} = \kForm{h}{k+2} = 0$ is trivially satisfied and the commuting property is achieved.
\end{proof}

\section{Approximate bounded cochain projection}\label{sec:pro_proj}

In this final section, we propose the construction of a approximate bounded cochain projection based on a two-level discrete de Rham structure. 

\begin{theorem}[Approximate bounded cochain projector on non-contractible domains]
    Let $\Omega$ be a non-contractible $n$-manifold with sufficiently smooth Lipschitz boundary. Let
    \(
    H\Lambda_h^{(k)}(\Omega)
    \subset
    H\widetilde{\Lambda}_h^{(k)}(\Omega)
    \subset
    H\Lambda^{(k)}(\Omega),
    \)
    be conforming finite-dimensional subspaces forming de~Rham subcomplexes. Assume that the enriched complex $H\widetilde{\Lambda}_h^{(k)}(\Omega)$ is topologically conforming, i.e.
    \(
    \mathrm{gap}
    \bigl(
    \mathcal H^{(k)},
    \widetilde{\mathcal H}_h^{(k)}
    \bigr)
    =
    0.
    \)
    Let
    \[
    \Pi^{(k)}
    :
    H\Lambda^{(k)}(\Omega)
    \rightarrow
    H\widetilde{\Lambda}_h^{(k)}(\Omega)
    \]
    denote the bounded projector through the augmented system \eqref{eq:BCP}, and let
    \[
    \Pi_{\mathscr C}^{(k)}
    :
    H\widetilde{\Lambda}_h^{(k)}(\Omega)
    \rightarrow
    H\Lambda_h^{(k)}(\Omega)
    \]
    be the canonical projector. Then the composition
    \[
    \Pi_{\mathscr C}^{(k)}\Pi^{(k)}
    :
    H\Lambda^{(k)}(\Omega)
    \rightarrow
    H\Lambda_h^{(k)}(\Omega)
    \]
    defines a bounded cochain projector where the following diagram commutes
    \[
    \begin{tikzcd}
        \hdots \arrow[r] &
        H\Lambda^{(k-1)}(\Omega)
        \arrow[r,"\mathrm d"]
        \arrow[d,"\Pi^{(k-1)}"]
        &
        H\Lambda^{(k)}(\Omega)
        \arrow[r,"\mathrm d"]
        \arrow[d,"\Pi^{(k)}"]
        &
        H\Lambda^{(k+1)}(\Omega)
        \arrow[r]
        \arrow[d,"\Pi^{(k+1)}"]
        &
        \hdots
        \\
        \hdots \arrow[r] &
        H\widetilde{\Lambda}_h^{(k-1)}(\Omega)
        \arrow[r,"\mathrm d"]
        \arrow[d,"\Pi_{\mathscr C}^{(k-1)}"]
        &
        H\widetilde{\Lambda}_h^{(k)}(\Omega)
        \arrow[r,"\mathrm d"]
        \arrow[d,"\Pi_{\mathscr C}^{(k)}"]
        &
        H\widetilde{\Lambda}_h^{(k+1)}(\Omega)
        \arrow[r]
        \arrow[d,"\Pi_{\mathscr C}^{(k+1)}"]
        &
        \hdots
        \\
        \hdots \arrow[r] &
        H\Lambda_h^{(k-1)}(\Omega)
        \arrow[r,"\mathrm d"]
        &
        H\Lambda_h^{(k)}(\Omega)
        \arrow[r,"\mathrm d"]
        &
        H\Lambda_h^{(k+1)}(\Omega)
        \arrow[r]
        &
        \hdots
    \end{tikzcd}
    \]
\end{theorem}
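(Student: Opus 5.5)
The plan is to verify the three defining properties of a bounded cochain projection for the composition $\Pi_{\mathscr C}^{(k)}\Pi^{(k)}$ one at a time. Throughout, $\Pi^{(k)}$ denotes the augmented-system projector of Section~\ref{sec:BCP}, built with the enriched complex $H\widetilde{\Lambda}_h^{(k)}(\Omega)$ in place of $H\Lambda_h^{(k)}(\Omega)$. All earlier results (Theorem~\ref{the:well_posed} and Lemmas~\ref{lem:idem}--\ref{lem:commute}) apply verbatim with this substitution, since they use only that the discrete spaces form a conforming de~Rham subcomplex.

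\textbf{Commutation.} This step comes first and is essentially formal. Because $\widetilde{\mathcal H}_h^{(k)}$ is topologically conforming by hypothesis, Lemma~\ref{lem:commute}, case (ii), gives $\mathrm d\Pi^{(k)}=\Pi^{(k+1)}\mathrm d$ on $H\Lambda^{(k)}(\Omega)$. For the canonical projector restricted to the enriched space we have $\mathrm d\Pi_{\mathscr C}^{(k)}=\Pi_{\mathscr C}^{(k+1)}\mathrm d$. Chaining the two identities,
\[
\mathrm d\,\Pi_{\mathscr C}^{(k)}\Pi^{(k)}=\Pi_{\mathscr C}^{(k+1)}\mathrm d\,\Pi^{(k)}=\Pi_{\mathscr C}^{(k+1)}\Pi^{(k+1)}\mathrm d ,
\]
which is exactly commutativity of both squares in the diagram.

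\textbf{Idempotence.} Let $\lambda^{(k)}\in H\Lambda_h^{(k)}(\Omega)\subset H\widetilde{\Lambda}_h^{(k)}(\Omega)$. By Lemma~\ref{lem:idem} applied on the enriched complex, $\Pi^{(k)}\lambda^{(k)}=\lambda^{(k)}$. Since $\Pi_{\mathscr C}^{(k)}$ is a projector onto $H\Lambda_h^{(k)}(\Omega)$, we also get $\Pi_{\mathscr C}^{(k)}\lambda^{(k)}=\lambda^{(k)}$. Hence the composition fixes $H\Lambda_h^{(k)}(\Omega)$. As its range lies in $H\Lambda_h^{(k)}(\Omega)$, this yields $(\Pi_{\mathscr C}^{(k)}\Pi^{(k)})^2=\Pi_{\mathscr C}^{(k)}\Pi^{(k)}$.

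\textbf{Boundedness.} By Lemma~\ref{lem:bound}, $\normHk{\Pi^{(k)}\phi^{(k)}}{k}\le\sqrt{1+C_P}\,\normHk{\phi^{(k)}}{k}$. It then remains to bound $\Pi_{\mathscr C}^{(k)}$ on the finite-dimensional space $H\widetilde{\Lambda}_h^{(k)}(\Omega)$, and this is the main obstacle. Section~\ref{sec:cano_proj} stresses that the canonical projector is in general not uniformly bounded on $H\Lambda^{(k)}(\Omega)$. Here, however, it acts only on piecewise polynomials of the enriched complex, where the degrees of freedom are well defined without smoothing.

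I would first try to obtain a uniform bound as follows. Assume the two levels are related by a fixed refinement, or a fixed polynomial-degree increase, of bounded ratio. Use local inverse and trace estimates on each element, together with the fact that $\mathrm d\Pi_{\mathscr C}^{(k)}=\Pi_{\mathscr C}^{(k+1)}\mathrm d$, to control the $\mathrm d$-part by the same argument one degree up. A scaling argument on the reference element, with finite-dimensional norm equivalence, then gives an $h$-independent constant $C_{\mathscr C}$. The overall bound would be $\normHk{\Pi_{\mathscr C}^{(k)}\Pi^{(k)}\phi^{(k)}}{k}\le C_{\mathscr C}\sqrt{1+C_P}\,\normHk{\phi^{(k)}}{k}$.

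If a uniform $C_{\mathscr C}$ cannot be justified in full generality, the fallback is to state the bound with the (finite, mesh-pair dependent) operator norm of $\Pi_{\mathscr C}^{(k)}$ restricted to $H\widetilde{\Lambda}_h^{(k)}(\Omega)$, and to flag uniformity as depending on shape-regularity of the two-level construction.
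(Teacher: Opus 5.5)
Your idempotence and commutation arguments are the same as the paper's. The two proofs differ only in how $\Pi_{\mathscr C}^{(k)}$ is bounded.

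The paper uses only that $H\widetilde{\Lambda}_h^{(k)}(\Omega)$ is finite-dimensional, which gives $\|\Pi_{\mathscr C}^{(k)}\|<\infty$. It then asserts that the enriched space is independent of the computational mesh, so that $\|\Pi_{\mathscr C}^{(k)}\|\sqrt{1+C_P}$ is mesh-independent. That is exactly your fallback plus an assertion.

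Your primary route does prove $h$-uniformity, but under hypotheses the theorem does not state: a fixed refinement pattern or bounded degree increase, shape regularity, and affine-invariant degrees of freedom. Under these, $\Pi_{\mathscr C}^{(k)}$ acts elementwise as a fixed linear map between fixed spaces on the reference element. Bounding $\omega$ and $\mathrm d\omega$ separately, via $\mathrm d\Pi_{\mathscr C}^{(k)}=\Pi_{\mathscr C}^{(k+1)}\mathrm d$, keeps the different scalings of $k$- and $(k+1)$-forms apart. So you do not actually need inverse or trace estimates.

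Your version gives a real argument with checkable assumptions. The paper's mesh-independence claim is hard to reconcile with $H\Lambda_h^{(k)}(\Omega)\subset H\widetilde{\Lambda}_h^{(k)}(\Omega)$ as $h\to 0$, since a fixed space cannot contain all $H\Lambda_h^{(k)}(\Omega)$. It also conflicts with the closing remark, which builds the enriched space by $p$-refining $H\Lambda_h^{(k)}(\Omega)$ up to a tolerance. If that procedure needs unboundedly many refinements as $h\to 0$, your hypothesis fails too, and only the mesh-dependent constant survives.

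Finally, the same uniformity issue sits in the constant you import from Lemma~\ref{lem:bound}. There $C_P$ is applied to elements of the discrete complement $\mathcal{Z}_h^{(k),\perp}$, which need not lie in $\mathcal{Z}^{(k),\perp}$, so it is really a discrete Poincar\'e constant. Both you and the paper take its mesh-independence as given.
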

\begin{proof}
    The projector $\Pi^{(k)}$ is idempotent, bounded, and commutes with the exterior derivative, as per Lemmas~\ref{lem:idem}, \ref{lem:bound}, \ref{lem:commute}. The composition $\Pi_{\mathscr C}^{(k)}\Pi^{(k)}$ is idempotent as we have
    \[
    \Pi_{\mathscr C}^{(k)}\Pi^{(k)} (\Pi_{\mathscr C}^{(k)}\Pi^{(k)}) = \Pi_{\mathscr C}^{(k)} (\Pi_{\mathscr C}^{(k)}\Pi^{(k)}) = \Pi_{\mathscr C}^{(k)}\Pi^{(k)},
    \]
    since both $\Pi_{\mathscr C}^{(k)}$ and $\Pi^{(k)}$ are idempotent. Furthermore, the two projectors individually satisfy the commuting property, and hence we have
    \[
    \mathrm{d}\Pi_{\mathscr C}^{(k)}\Pi^{(k)} = \Pi_{\mathscr C}^{(k)}\mathrm{d}\Pi^{(k)} = \Pi_{\mathscr C}^{(k)}\Pi^{(k)}\mathrm{d}.
    \]
    The boundedness of $\Pi_{\mathscr C}^{(k)}\Pi^{(k)}$ follows from the boundedness $\Pi^{(k)}$ and the fact that $\Pi_{\mathscr C}^{(k)}$ forms a linear map between two finite-dimensional subspaces of the same discrete de Rham structure. Since the intermediate space $H\widetilde{\Lambda}_h^{(k)}(\Omega)$ is finite-dimensional, every linear operator defined on this space is continuous \cite{Boffi2013MixedApplications}. Consequently, the canonical projector \(\Pi_{\mathscr C}^{(k)}\) has a finite operator norm
    \[
    \|\Pi_{\mathscr C}^{(k)}\|
    :=
    \sup_{\substack{\kForm{\widetilde{\omega}_h}{k} \in H\widetilde{\Lambda}_h^{(k)}(\Omega) \\ \kForm{\widetilde{\omega}_h}{k}\neq0}}
    \frac{
    \|\Pi_{\mathscr C}^{(k)}\kForm{\widetilde{\omega}_h}{k}\|_{H\Lambda^{(k)}(\Omega)}}
    {\|\kForm{\widetilde{\omega}_h}{k}\|_{H\Lambda^{(k)}(\Omega)}}
    <\infty.
    \]
    Moreover, the space $H\widetilde{\Lambda}_h^{(k)}(\Omega)$ is constructed solely to resolve the topology of the domain and is therefore independent of the computational mesh for $H\Lambda_h^{(k)}(\Omega)$. Hence, we have
    \[
    \|
    \Pi_{\mathscr C}^{(k)}
    {\Pi}^{(k)}
    \kForm{\phi}{k}
    \|_{H\Lambda^{(k)}(\Omega)} 
    \leq
    \|\Pi_{\mathscr C}^{(k)}\|\,
    \|
    {\Pi}^{(k)}
    \kForm{\phi}{k}
    \|_{H\Lambda^{(k)}(\Omega)}
    \stackrel{\eqref{eq:sob_bound}}{\leq}
    \|\Pi_{\mathscr C}^{(k)}\|\,
    \sqrt{1 + C_P}\,
    \|\kForm{\phi}{k}\|_{H\Lambda^{(k)}(\Omega)},
    \]
where $\|\Pi_{\mathscr C}^{(k)}\|\,\sqrt{1 + C_P}$ is independent of the computational mesh parameter.
    
    Therefore, the composition of the two operators satisfies the properties of a bounded cochain projector.
\end{proof}

\begin{remark}
    The space $H\widetilde{\Lambda}_h^{(k)}(\Omega)$ is independent of the particular solution $\kForm{\phi}{k}$, to be approximated and depends only on the topology of $\Omega$. One practical way of constructing this space would be to start with the initial space $H\Lambda_h^{(k)}(\Omega)$, apply $\Pi_{\mathscr{C}}^{(k)}\Pi^{(k)}$ and successively $p$-refine within each spectral element until the commuting error, $\normHk{\kForm{{h}}{k + 1}}{k + 1}$, drops below a prescribed tolerance (e.g near machine precision $\mathcal{O}(10^{-12})-\mathcal{O}(10^{-16})$). 
\end{remark}

\bibliographystyle{siamplain}
\bibliography{references}

\end{document}